\newcommand{\E}{\mathbb{E}}
\newcommand{\bin}{\textnormal{binomial}}
\DeclareMathOperator{\Cov}{Cov}
\DeclareMathOperator{\Span}{Span}
\DeclareMathOperator{\Var}{Var}
\theoremstyle{plain}%
\newtheorem{theorem}{Theorem}[section]
\newtheorem{lemma}[theorem]{Lemma}
\newtheorem{proposition}[theorem]{Proposition}
\theoremstyle{definition}
\newtheorem{definition}[theorem]{Definition}
\newtheorem{example}[theorem]{Example}
\theoremstyle{remark}
\newtheorem{remark}[theorem]{Remark}
\title{A noise sensitivity theorem for Schreier graphs}
\author{Malin Palö Forsström}
\date{\today}
\definecolor{code_gray}{rgb}{0.5,0.5,0.5}
\definecolor{dark_gray}{rgb}{0.2,0.2,0.2}
\definecolor{gray}{rgb}{0.5,0.5,0.5}
\definecolor{light_gray}{rgb}{0.95,0.95,0.95}
\definecolor{dark_blue}{RGB}{29,29,102}
\definecolor{mid_blue}{RGB}{51,51,178}
\definecolor{light_blue}{RGB}{153,153,217}
\begin{document}

\maketitle

\begin{abstract}
During the last 15 years, several extensions of the concept of noise sensitivity, first coined by Benjamini, Kalai and Schramm in~\cite{schramm2000}, have been studied. One such extension was studied in~\cite{mpf2014}, where the definition of noise sensitivity was extended to include  noise consisiting of sequences of irreducible and reversible Markov chains. In this paper we focus on the case where the Markov chain is a random walk on a Schreier graph, and show that a version of the Benjamini-Kalai-Schramm noise sensitivity theorem, connecting influences to noise sensitivity, holds in this setting. We then apply this result to give an alternative proof of one of the main results from a recent paper on exclusion sensitivity by Broman, Garban and Steif~\cite{bgs2013}.
\end{abstract}

\section{Introduction}

In~\cite{schramm2000}, Benjamini, Kalai and Schramm coined the term noise sensitivity, looking at how likely the output of a Boolean function were to be different at the starting point and the ending point of a continuous time  random walks on a Hamming cube. In the same paper they gave a result, now often called the Benjamini-Kalai-Schramm noise sensitivity theorem,  connecting influences to noise sensitivity, which they then used to show that percolation crossings of boxes in \( \mathbb{Z}^2 \) are very sensitive to resampling some of the edges in its domain.
Since this paper was published, several  analogues of this theorem have been proved in  slightly different settings, such as noise being modelled as biased random walks on Hamming cubes~\cite{abgm2014}, Brownian motion~\cite{kms2012b}, exclusion processes~\cite{bgs2013} and random walks on Caley graphs~\cite{rb2014}. Also, a quantitative version of the original result has been given by Keller and Kindler~\cite{kk2013}. Our main result is a version  of the Benjamini-Kalai-Schramm noise sensitivity theorem for sequences of random walks on Schreier graphs. 

To define what we mean by a \emph{Schreier graph}, let \( \mathcal{G}  \) be a finite group acting transitively on a finite set \( S \). If \( g \in \mathcal{G} \) and \( x \in S \) we will write \( x_g\) to denote the action of \( g \) on \( x \). Further, let \( U \) be a symmetric generating set of \( \mathcal{G} \), meaning that \( U \) generates \( \mathcal{G} \) and \( U = U^{-1} \). Then \( S \), \( \mathcal{G} \) and \( U \) generate the Schreier graph \( \mathcal{G}(S,\mathcal{G},U ) \); having vertex set \( S \) and an edge \( (x,y ) \) whenever there is \( u \in U \) with \( y = x_u \).

By a random walk on a Schreier graph \( G (S, \mathcal{G}, U) \) we mean the following. Let \( (t_k)_{k \geq 0} \) be a sequence of times with \( t_0 = 0 \) and \( t_k - t_{k-1}\sim \exp(1) \) independently  for all \( k \geq 1 \). For each \( k \geq 1 \), pick  an element \( u_k \in U \) independently of everything else and uniformly at random.  Pick \( X_0 \in S \) uniformly at random and set \( X_t = X_0 \) for all \( t \in [t_0, t_1 ) \). For any \( k \geq 1 \) and \( t \in [t_k, t_{k+1} ) \), set \( X_t = (X_{t_{k-1}})_{u_k} \). If \( (X_t)_{t \geq 0 }\) is chosen according to this rule, we say that \( (X_t)_{t \geq 0 } \) is a random walk on the Schreier graph \( G(S,\mathcal{G},U) \), and whenever we talk about a random walk in the setting of Schreier graphs we will think of the random walk as having been generated in this way. As all Schreier graphs are regular,  \( X_0 \) will always be chosen  according to the stationary  measure \( \pi \) of this random walk.

Whenever we have a Schreier graph \( G (S, \mathcal{G}, U) \), we get a natural notion of the influence \( I_u(f) \) of an element  \( u \in U \) on a function \( f \), by defining 
\[
I_u(f) = P \Bigl( f(X_0) \not = f((X_0)_u)\Bigr).
\]
In the special case when the Schreier graph  \( G (S, \mathcal{G}, U) \) is an \( n \)-dimensional Hamming cube (\( S =\mathbb{Z}_2^n\), \( \mathcal{G} = (\mathbb{Z}_2^n , \oplus)\), where \( \oplus \) is coordinatewise addition module 2, and \( U = \{ (1,0,0,\ldots),\, (0,1,0,\ldots),\, (0,0,1,0,\ldots),\ldots,(0,0, \ldots, 0,1) \)),  this definition coincides with the definition of influences used in e.g.~\cite{schramm2000} and~\cite{kkl1988}.

We are now ready to present our main result.

\begin{theorem}
Let \( G =G(S, \mathcal{G},U)\) be a Schreier graph and let \( f \colon S \to \{ 0,1 \} \). Let \( X = ( X_t)_{t \geq 0 } \) be a random walk on \( G \), let   \( \lambda_1 \) be the spectral gap of \( X \) and let \( \rho \) be the log-Sobolev constant of \( X \). Then for any \( r \in (0,1) \), any \( \Lambda>0  \) and any \( T > 0 \),
\begin{equation}
\Cov (f(X_0), f(X_{T}) 
\leq 
\frac{e^{-\Lambda \log (r ) / \rho    }}{2\lambda_1   } \cdot   \left(\frac{ \sum_{u\in U}  I_u(f)^2 }{|U|}\right)^{1/(1+r) }  + \Var(f) \cdot e^{- \Lambda T}.
\label{equation: main result}
\end{equation}
\label{theorem: main result}
\end{theorem}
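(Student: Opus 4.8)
The plan is to run the spectral argument of Benjamini--Kalai--Schramm, adapted to the generator of the random walk on $G$. Write $g = f - \E f$, so that $\Cov(f(X_0),f(X_T)) = \langle g, P_T g\rangle$, where $P_T = e^{TL}$ and $L$ is the (negative semidefinite) generator, $Lh(x) = \tfrac1{|U|}\sum_{u\in U}(h(x_u)-h(x))$. Diagonalise $-L$, with eigenvalues $0 = \mu_0 < \mu_1 \le \mu_2 \le \cdots$ (so $\mu_1 = \lambda_1$) and orthonormal eigenfunctions $\varphi_i$, and expand $g = \sum_{i\ge 1}\hat g_i\varphi_i$, so that $\langle g, P_Tg\rangle = \sum_{i\ge 1}\hat g_i^2 e^{-\mu_i T}$. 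I would split this sum at the threshold $\Lambda$: the part with $\mu_i > \Lambda$ is at most $e^{-\Lambda T}\sum_{i\ge 1}\hat g_i^2 = e^{-\Lambda T}\Var(f)$, which is the second term in~\eqref{equation: main result}, so it remains to bound the low-frequency part, which is at most $\sum_{0<\mu_i\le\Lambda}\hat g_i^2$.

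For this part I would first trade the restriction $\mu_i\le\Lambda$ for the semigroup and the spectral gap: since $\mu_i\ge\lambda_1$ for $i\ge 1$ and $e^{2\mu_i t}\le e^{2\Lambda t}$ on the relevant range, for every $t>0$
\[
\sum_{0<\mu_i\le\Lambda}\hat g_i^2 \;\le\; \frac1{\lambda_1}\sum_{0<\mu_i\le\Lambda}\mu_i\hat g_i^2 \;\le\; \frac{e^{2\Lambda t}}{\lambda_1}\sum_{i\ge 1}\mu_i e^{-2\mu_i t}\hat g_i^2 \;=\; \frac{e^{2\Lambda t}}{\lambda_1}\,\mathcal E(P_t g, P_t g),
\]
where $\mathcal E$ is the Dirichlet form. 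Next use the identity $\mathcal E(h,h) = \tfrac1{2|U|}\sum_{u\in U}\|D_u h\|_2^2$ with $D_u h(x) = h(x_u)-h(x)$ (which follows from $\sum_{u}D_u^\ast D_u = 2|U|(-L)$), together with the hypercontractive estimate associated with the log-Sobolev constant: in the normalisation of $\rho$ used in the theorem one has $\|P_t h\|_2\le\|h\|_{1+r}$ exactly when $e^{2\rho t}\ge 1/r$, so taking $t = \tfrac1{2\rho}\log(1/r)$ makes $e^{2\Lambda t} = e^{-\Lambda\log(r)/\rho}$, the prefactor in~\eqref{equation: main result}. Applying this to $D_u g$ gives $\|D_u P_t g\|_2^2 \le \|D_u g\|_{1+r}^2 = I_u(f)^{2/(1+r)}$, the last step because $D_u g$ takes values in $\{-1,0,1\}$ and is nonzero precisely on a set of $\pi$-measure $I_u(f)$. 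Hence $\mathcal E(P_t g, P_t g)\le \tfrac1{2|U|}\sum_u I_u(f)^{2/(1+r)}$, and since $x\mapsto x^{1/(1+r)}$ is concave, Jensen's inequality yields $\tfrac1{|U|}\sum_u I_u(f)^{2/(1+r)} = \tfrac1{|U|}\sum_u\bigl(I_u(f)^2\bigr)^{1/(1+r)}\le\bigl(\tfrac1{|U|}\sum_u I_u(f)^2\bigr)^{1/(1+r)}$. Combining everything with the high-frequency bound gives~\eqref{equation: main result}.

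The step I expect to be the main obstacle is the hypercontractive bound $\|D_u P_t g\|_2 \le \|D_u g\|_{1+r}$. It is immediate if $D_u$ commutes with the averaging operator $M = \tfrac1{|U|}\sum_{v\in U}R_v$ (equivalently with $P_t$), since then $D_u P_t g = P_t D_u g$ and the bare hypercontractivity inequality $\|P_t D_u g\|_2\le\|D_u g\|_{1+r}$ applies directly; but $R_u$ need not commute with $M$ when $U$ is not invariant under conjugation, so for general Schreier graphs this requires an extra argument. A natural workaround is that conjugating $U$ by $u$ produces an isomorphic Schreier graph, which identifies $R_u M R_u^{-1}$ with the averaging operator of the walk generated by $uUu^{-1}$ and lets one transfer the hypercontractive estimate with the same constant $\rho$, after which one controls the discrepancy between the two walks. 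Crucially, one needs the exponent $2/(1+r)$ here and not the weaker $1/(1+r)$ that a crude application of H\"older's inequality to the bilinear form $\langle -Lg,\, P_{2t}g\rangle$ would give; obtaining this is the delicate point, and is exactly what makes the bound strong enough to recover the exclusion-sensitivity result of Broman, Garban and Steif.
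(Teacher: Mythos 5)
Your overall skeleton matches the paper's: spectral decomposition of the covariance, splitting at the eigenvalue threshold \( \Lambda \) to produce the \( \Var(f)e^{-\Lambda T} \) term, hypercontractivity at time \( t=\tfrac1{2\rho}\log(1/r) \), the observation that \( L_u f \) is \( \{-1,0,1\} \)-valued so \( \|L_u f\|_{1+r}^2=I_u(f)^{2/(1+r)} \), and a Hölder/Jensen step. But the step you yourself flag is a genuine gap, not a removable technicality. From \( \mathcal{E}(P_tg,P_tg)=\tfrac1{2|U|}\sum_u\|D_uP_tg\|_2^2 \) you need \( \|D_uP_tg\|_2\le\|D_ug\|_{1+r} \), while hypercontractivity only controls \( \|P_t(D_ug)\|_2 \); since \( D_u \) and \( P_t \) do not commute on a general Schreier graph, the inequality is unproven. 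Your conjugation workaround does not close it: conjugation identifies \( R_uP_tR_u^{-1} \) with the semigroup of the walk generated by \( uUu^{-1} \) (same log-Sobolev constant), but then \( D_uP_tg=R_uP_tR_u^{-1}(R_ug)-P_tg \) mixes two different semigroups, and no bound on their ``discrepancy'' of the required strength is given or apparent. Moreover, the per-eigenvector identity that makes the commutation argument work on the Hamming cube genuinely fails here: the remark following Lemma~\ref{lemma:the improved first inequality} gives an explicit counterexample on \( S_4 \) generated by transpositions.

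The paper circumvents the commutation problem entirely. Its key Lemma~\ref{lemma:the improved first inequality} states that for each eigenspace \( \Psi_\lambda \) of \( -Q \) with orthonormal basis \( \psi_1,\ldots,\psi_m \) one has \( \sum_i 2\lambda|U|\langle f,\psi_i\rangle^2=\sum_i\sum_{u\in U}\langle L_uf,\psi_i\rangle^2 \), proved using only \( U=U^{-1} \) and the fact (Lemma~\ref{lemma: basis after rotation}) that \( \psi_i(\cdot_u) \) is again an orthonormal basis of the same eigenspace. Because every weight you need (\( \mathbf{1}_{\lambda<\Lambda} \), \( \lambda^{-1}\le\lambda_1^{-1} \), \( e^{-2\lambda t} \)) is constant on eigenspaces, this eigenspace-level identity suffices to bound the low-frequency mass by \( \tfrac{e^{2\Lambda t}}{2\lambda_1|U|}\sum_u\|H_t(L_uf)\|_2^2 \), where the semigroup now sits outside \( L_uf \) and hypercontractivity applies directly; from there your computation goes through verbatim. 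So the missing ingredient is precisely this eigenspace identity (or an equivalent substitute), and without it your argument is incomplete at its central step.
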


% Sequence version
%\begin{theorem}
% Let  \( G_n =G(S^{(n)}, \mathcal{G}_n,U_n)\) be a sequence of Schreier graphs and let \( (f_n)_{ n \geq 1} \), \( f_n \colon S^{(n)} \to \{ 0,1 \} \) be a sequence of Boolean functions. For each \( n \), let \( X^{(n)} \) be the random walk on \( G_n \), let   \( \lambda_1^{(n)} \) be the spectral gap of \( X^{(n)} \) and let \( \rho^{(n)} \) be the log-Sobolev constant of \( X^{(n)} \). Then for any \( r^{(n)}  \in (0,1) \) and any \( \Lambda^{(n)}>0  \),
%\begin{equation*}
%\Cov (f_n(X_0), f_n(X_{T_n}) 
%\leq 
%\frac{e^{-\Lambda^{(n)} \log (r^{(n)} ) / \rho^{(n)}     }}{2\lambda_1^{(n)}   } \cdot   \left(\frac{ \sum_{u\in U_n}  I_u(f_n)^2 }{|U_n|}\right)^{1/(1+r^{(n)}) } \!\!\!\! + \Var(f_n) \cdot e^{- \Lambda^{(n)} T_n}.
%\label{equation: main result}
%\end{equation*}
%\label{theorem: main result}
%\end{theorem}

To make the main result more concrete, we now present two examples of families of Schreier graphs, and note what the spectral gaps and the log-Sobolev constants are in both cases. The first of these two examples has the \( n \)-dimensional Hamming cube as a special case.

\begin{example}\label{example: hypercubes}
Let \(  \Omega_{m,n} \) denote the Schreier graph \(  G( \{ \mathbb{Z}_m^n, (\mathbb{Z}_m^n, \oplus), \{ e_k \}_{k=1}^n) \),  where \( \oplus \) is coordinate\-wise addition modulo \( m \), and \(e_k \) is the unique element in \( {\{ 0,1, \ldots, m-1 \}^n }\) which is zero everywhere except at the \( k\)th coordinate where it is one.  Note that when \( m = 2 \), \( \Omega_m^n \) is an \( n \)-dimensional Hamming cube. 
Then \( |U_n| = n \), and it is well known that for a fixed \( m \in \mathbb{N} \), the random walk \( X^{(n)} \) on  \( \Omega_{m,n} \) has spectral gap \( \lambda_1^{(n)} = \frac{1 - \cos(2\pi/m)}{n}\sim 2\pi^2/m^2n \) and  {log-Sobolev} constant \( \rho^{(n)} \geq 4\pi^2/5m^2n \)~(see e.g.~\cite{cs2003} and~\cite{dsc1996}). 

Fix \( \varepsilon > 0 \) and \( k > 0 \).
If we let \( T = \varepsilon n \), \( \Lambda = k/m^2n \), it follows from~\eqref{equation: main result} that for any  function \( f_n \colon \mathbb{Z}_m^n \to \{ 0,1 \} \) and any \( r^{(n)} \in (0,1)\),
\begin{equation*}
\Cov (f_n(X_0^{(n)}), f_n(X_{\varepsilon n}^{(n)}) 
\leq 
\frac{m^2ne^{-5k \log (r^{(n)} ) /2\pi^2   }}{\pi^2  } \cdot   \left(\frac{ \sum_{u\in U_n}  I_u(f_n)^2 }{n}\right)^{1/(1+r^{(n)}) } \!\!\!\! + \Var(f_n) \cdot e^{- \varepsilon  k/m^2}.
\end{equation*}
As this holds for all \( k > 0 \), it follows that for any sequence \( (f_n)_{n\geq 1 } \), \( f_n \colon \mathbb{Z}_n^m \ to \{ 0,1 \} \), 
\[
\lim_{n \to \infty} \Cov (f_n(X_0^{(n)}), f_n(X_{\varepsilon n}^{(n)})  = 0 
\]
for any \( \varepsilon > 0 \) whenever there is \( \delta > 0 \) such that for all large enough \( n \),
\begin{equation}\label{equation: small sum of influences}
\sum_{u \in U_n} I_u(f_n)^2 \leq n^{-\delta}
\end{equation}
When \( m = 2 \), this is the weaker version of the Benjamini-Kalai-Schramm noise sensitivity theorem presented in e.g.~\cite{gs2014}.

A commonly used example of a sequence of functions \( (f_n)_{n \geq 1 } \) for which  
\[
\sum_{u \in U_n} I_u(f_n)^2 \leq n^{-\delta}
\]
is the so called \emph{Tribes function} which  first studied by Ben-Or and Linial in~\cite{bl1987}. To define this function, consider the following scenario. In an area lives a total of \( \ell_n \) tribes, each having exactly \( k_n \) members, which are in constant conflict with eachother. To decide whether or not to start a war in the area, each tribe, independently of all other tribes, lets its members vote. A member votes 1 if she wants her tribe to start a war, and 0 else. A tribe starts a war if all its members votes for starting a war. The tribes function is the function \( f_n \colon \{ 0,1 \}^{\ell_nk_n} \to \{ 0 , 1 \} \) whose entries represent the individual votes, and whose output is 1 if a war is started by any tribe and 0 else.

To simplify the notation for influences, we will write \( I_i(f_n) \) instead of \( I_{e_i}(f_n) \). Then for each \( i \in [n] \), one can check that
\[
I_i(\textnormal{Tribes}_{\, \ell_n,k_n}) = \left( 1 -  \left( \frac{1}{2} \right)^{k_n} \right)^{\ell_n-1}   \left( \frac{1}{2} \right)^{k_n-1} ,
\]
implying that
\[
 \sum_i I_i(\textnormal{Tribes}_{\, \ell_n,k_n})^2 = \ell_n k_n  \left( 1 - \left( \frac{1}{2} \right)^{k_n} \right)^{2(\ell_n-1)}  \left( \frac{1}{2} \right)^{2(k_n-1)} \hspace{-3em}.
\]
In particular, if we set \( k_n = \lfloor \log_2 n - \log_2 \log_2 n \rfloor \), \( \ell_n = n/k_n \) and assume that \( \ell_n \) is an integer, we obtain
\begin{equation}
 \sum_i I_i(\textnormal{Tribes}_{\, \ell_n,k_n})^2 \asymp  \left( \frac{\log_2 n}{\sqrt{n}} \right)^2\!\!\!.
\label{IIIequation: tribes on a cube}
\end{equation}
This shows that \( (\textnormal{Tribes}_{\, \ell_n,k_n})_n \)  is a sequence of Boolean functions such that the sum of its squared influences is small enough for the inequality in~\eqref{equation: small sum of influences} to hold.
\end{example}

We now consider a second example of a sequence of Scheier graphs, which is covered neither by the results in~\cite{schramm2000} nor by the results in~\cite{rb2014}

\begin{example}\label{example: slices}
Let \( (m_n)_{n\geq 1 }  \) be a sequence of integers with \( m_n \asymp n \)  and let \( S_n \) be the symmetric group on \( n \) elements. Further, let et  \(  J_{n,m_n} \) denote the Schreier graph
\[
 G( \{ w \in \{ 0,1 \}^n\colon \| w \| = m_n \}, S_n, \{ \textnormal{transpositions of } \{ 1, 2,  \ldots, n \} \})
\]
where \( S_n \) acts on a sequence \( w \in \{ 0,1 \}^n \) by permuting its elements.
The graph \( J_{n, m_n} \) is  the so called \emph{Johnson graph} with parameters \( n \) and \(m_n \),  and is an example of a graph that is a Schreier graph but not a Caley graph.
For any \( n \) and \( m_n \),  \( |U_n| = n \), and the random walk \( X^{(n)} \) on \(  J_{n,m_n} \) has \( |U_n| = \binom{n}{2} \), spectral gap \( \lambda_1^{(n)} = 2/(n-1) \)~\cite{ds1987} and  log-Sobolev constant \( \rho^{(n)} = \Theta \bigl(1/n\log \frac{n(n-1)}{2m_n(n-m_n)} \bigr) \)~\cite{ly1998}. Set \( T_n = n \) and \( \Lambda^{(n)} = k /n \) for some \( k > 0 \). Then using~\eqref{equation: main result}, for any \( r^{(n)} \in (0,1 ) \) and any \( f_n \colon \{ w \in \{ 0,1 \}^n \colon \| w \| = m_n \} \to \{ 0,1 \}\) and some absolute constant \( C \)  we have
\begin{align*}
\Cov (f_n(X_0), f_n(X_{n}) 
&\leq 
\frac{e^{-kC \log (r^{(n)} ) \cdot \log \frac{n(n-1)}{2m_n(n-m_n)}    }}{2/(n-1)   } \cdot   \left(\frac{ \sum_{u\in U_n}  I_u(f_n)^2 }{\binom{n}{2}}\right)^{1/(1+r^{(n)}) } \!\!\!\! + \Var(f_n) \cdot e^{- k}
\\&=
\frac{e^{-kC \log (r^{(n)} ) \cdot \log \frac{n(n-1)}{2m_n(n-m_n)}    } }{2}\cdot   \left(\frac{ 2\sum_{u\in U_n}  I_u(f_n)^2 }{n (n-1)^{-r^{(n)} }}\right)^{1/(1+r^{(n)}) } \!\!\!\! + \Var(f_n) \cdot e^{- k}.
\end{align*}
As \( m_n \asymp n \), if we pick \( r^{(n)} \) constant, the  term 
\[
e^{-kC \log (r^{(n)} ) \cdot \log \frac{n(n-1)}{2m_n(n-m_n)}    }
\]
will be bounded from above as \( n \to \infty\). As \( r^{(n)} \) can be chosen to be arbitrarily close to zero, is follows that for any sequence \( f_n \colon \{ w \in \{ 0,1 \}^n \colon \| w \| = m_n \} \to \{ 0,1 \} \),
\[
\lim_{n \to \infty} \Cov (f_n(X_0), f_n(X_{n})  = 0
\]
if
\begin{equation}\label{equation: influences for Johnson graphs}
 \sum_{u\in U_n}  I_u(f_n)^2 < n^{1-2\delta} \asymp |U_n|^{1/2 - \delta}
\end{equation}
 for some \( \delta > 0 \).

As in the previous example, one can show that one sequence of functions that satisfies~\eqref{equation: influences for Johnson graphs} is the Tribes function defined in the previous example. 

\end{example}

The main motivation for trying to find a version of the Benjamini-Kalai-Schramm noise sensitivity theorem in the setting of random walks on Schreier graphs was that quite recently, in~\cite{kkl2009}, O'Donnell and Wimmer generalized another theorem involving influences, the so called \textsc{kkl} theorem, to Schreier graphs. Although our proof is closer to the proof of the original noise sensitivity theorem as presented in eg.~\cite{gs2014}, we will use the terminology and notation from~\cite{kkl2009}.

In the next section, we will present the main tools and techniques that are used in the proof. The proof of Theorem~\ref{theorem: main result} is then presented in Section 3. Finally, in Section 4, we will show that Theorem~\ref{theorem: main result} provides an alternative proof of a weaker version of a noise sensitivity theorem  for exclusion processes that was given in~\cite{bgs2013}.

\section{Notation and tools}

Recall from the introduction that given a finite group \( \mathcal{G}  \) which acts transitively on a finite set \( S \) and a symmetric generating set \( U \) of \( G \), we say that the graph with vertex set \( S \) and an edge between two vertices \( x \) and \( y \) whenever there is \( u \in U \) with \( y = x_u \) is the Schreier graph \( \mathcal{G}(S,\mathcal{G},U ) \). Note that all Schreier graphs are  connected, regular and undirected. 
%Whenever we talk about a sequence of Schreier graphs, \( (G_n)_{n \geq 1 } \), we will assume that \( |S^{(n)}| \to \infty \) as \( n \to \infty \).

Throughout this paper, we will  be concerned with  reversible and irreducible continuous time Markov chains \( X \) which are random walks on Schreier graphs. For any such Markov chain \( X \), we will let \( S \) denote the state space, \( Q_n = (q_{ij} )_{i,j \in S }\) denote its generator matrix and \( \pi  \) be its stationary distribution. As all Schreier graphs are regular, the measure \( \pi  \) will be a uniform measure on the state space. We write \( X_t  \) to denote the position of \( X  \) at time \( t \in \mathbb{R}_+ \), and will always assume that \( X_0 \) has been choosen according to \( \pi \).

Next, for all \( t \geq 0 \), let \( \smash{H_t } \)  denote the continuous time Markov semigroup of the Markov chain given by 
\[
H_t= \exp(tQ) .
\]
In other words, \( H_t \) operates on a function \( f \) with domain \( S \) by
\[
H_t f(\cdot) = \E[f(X_t) \mid X_0  = \cdot].
\]
For real valued functions \( f \) and \( g  \) with domain \( S \), we will use the inner product 
\[
 \langle f , g \rangle = \langle f,g \rangle_{\pi}= \E [ f(X_0)g(X_0) ].
\]
As \( X  \) is assumed to be reversible and irreducible, we can find a set, \( \{ \psi_ j \}_{j = 0}^{|S|-1} \) of eigenvectors to \( { -Q} \), with corresponding eigenvalues 
\begin{equation}
0 = \lambda_0 < \lambda_1 \leq \lambda_2 \leq \ldots \leq \lambda_{|S|-1}
\label{IIIequation: positive eigenvalues}
\end{equation}
such that  \( \{ \psi_ j\} \) is an orthonormal basis with respect to \( \langle \cdot, \cdot \rangle \) for the space of real valued functions on \( S \) (see e.g.~\cite{chung1996}). We will always let \( \psi_0 \equiv 1 \) be the eigenvector corresponding to \( \lambda_0 = 0 \).
The smallest nonzero eigenvalue, \( \lambda_1 \), is called the spectral gap of the Markov chain \( X \), and its inverse, \( t_{rel} \coloneqq 1/\lambda_1 \) is called the relaxation time.  
The eigenvectors \( \{ \psi_ j\} \) of \( -Q \) will be eigenvectors to \( H_t \) aswell, with corresponding eigenvalues \( \{ e^{-\lambda_j t} \} \).
Since the set \( \{ \psi_ j\} \) is an orthonormal basis for the set of real valued functions with domain \( S \), for any \( f \colon S \to \mathbb{R} \) we can write
\[
f = \sum_{j=0}^{|S|-1} \langle f, \psi_j \rangle \, \psi_j.
\]
To simplify notation, we will write  \( \hat f(j) \) instead of \( \langle f, \psi_i \rangle \). 
For  \( j=0,1,\ldots, |S|-1\), the terms \( \hat f(j)  \) are called the Fourier coefficients of \( f \) with respect to the basis \( \{ \psi_j \} \), and are useful for expressing several probabilistic quantities which will be of interest of us. One of the simplest such quantities is the expected value of \( f(X_0) \) for a function \( f \colon S \to \mathbb{R} \). Recalling that \( \psi_0 \equiv 1 \), this can be expressed as
\[
\E [f(X_0)] = \E[f(X_0)\cdot 1] = \langle f, 1 \rangle = \langle f, \psi_0 \rangle = \hat f(0).
\]
Although a little bit more complicated, using the orthonormality of the eigenvectors \( \{ \psi_i \} \),  for any \( t \geq 0 \) we  get that
\begin{equation*}
\begin{split}
 \E[f(X_0)f(X_t)]
&= \E[f(X_0)H_tf(X_0)] 
= \langle f, H_tf \rangle
= \Bigl\langle\sum_{i=0}^{|S|-1} \hat f(i)\psi_i , H_t \sum_{j=0}^{|S|-1} \hat f(j)\psi_j \Bigr\rangle
\\&= \Bigl\langle\sum_{i=0}^{|S|-1} \hat f(i)\psi_i , \sum_{j=0}^{|S|-1} \hat f(j)H_t\psi_j \Bigr\rangle 
= \Bigl\langle\sum_{i=0}^{|S|-1} \hat f(i)\psi_i , \sum_{j=0}^{|S|-1} \hat f(j)e^{-\lambda_j t}\psi_j \Bigr\rangle 
\\&= \sum_{i=0}^{|S|-1}  \sum_{j=0}^{|S|-1} \hat f(i)\hat f(j) e^{-\lambda_j t}\langle\psi_i , \psi_j \rangle 
= \sum_{j=0}^{|S|-1}   e^{-\lambda_j t}\hat f(i)^2
\end{split}
\end{equation*}
and consequently, that
\begin{equation}
\begin{split}
\Cov (f(X_0), f(X_t)) 
&= \E [f(X_0)f(X_t)] - \E[f(X_0)] \E[f(X_t)]
\\&= \E [f(X_0)f(X_t)] - \E[f(X_0)]^2
\\&= \sum_{j=1}^{|S|-1}    e^{-\lambda_jt}\hat f(j)^2.
\label{equation: covariance}
\end{split}
\end{equation}

Now recall that for any Schreier graph \( G (S, \mathcal{G}, U) \), any function \( f\colon S \to \{ 0 ,1 \} \) and any  \( u \in U \), we defined the influence of \( u \) on \( f \) by
\[
I_u(f) \coloneqq P(f(X_0) \not = f((X_0)_u)) .
\]
To connect this with the generator \( Q \) and the random walk \( X\) on \( G (S, \mathcal{G}, U) \), for any \( w \in S \) we define \( L_u f(w) \coloneqq  f(w)-f(w_u) \). With this notation \( -Q \)  can be written as
\[
-Q = \frac{1}{|U|} \sum_{u \in U} L_u
\]
and
\[
I_u(f) =P(f(X_0) \not = f((X_0)_u)) =\E [(f(X_0) - f((X_0)_u))^2]  =  \langle L_u f,  L_u f \rangle  .
 \]
Finally, for any \( u \in U \) and any \( f\colon S \to \mathbb{R} \),  note that
\begin{align*}
2 \langle L_u f ,f \rangle 
&= \E[(f(X_0)-f((X_0)_u))f(X_0)] + \E[(f(X_0)-f((X_0)_u))f(X_0)]
\\&= \E[(f(X_0)-f((X_0)_u))f(X_0)] + \E[(f((X_0)_u)-f(((X_0)_u)_u))f((X_0)_u)]
\\&= \E[(f(X_0)-f((X_0)_u))f(X_0)] + \E[(f((X_0)_u)-f(X_0))f((X_0)_u)]
\\&= \E[(f(X_0)-f((X_0)_u))f(X_0)] - \E[(f(X_0) - f((X_0)_u))f((X_0)_u)]
\\&= \E[(f(X_0)-f((X_0)_u))(f(X_0)-f((X_0)_u))]
\\&= \langle L_u f , L_u f \rangle .
\end{align*}

In the proof of Theorem~\ref{theorem: main result}, we will use the following so called \emph{hypercontractive property} of the operator \( H_t \) (see e.g.~\cite{kkl2009},~\cite{dsc1996} and ~\cite{g1975}).
\begin{theorem}\label{theorem: sobolev}
Let \( \rho \) be the log-Sobolev constant for the random walk \( (X_t)_{t \geq 0 } \) on the Schreier graph \( G  (S,\mathcal{G},U) \). Then for all \( p \) and \( q \) satisfying \( 1 \leq p \leq q \leq \infty \) and \( \frac{q-1}{p-1} \leq \exp(2\rho t) \), and all \( f \in L^2(X) \), we have
\[
\| H_t f \|_q \leq \| f \|_p,
\]
where  \( \| \cdot \|_p = \E[|\cdot|^p]^{1/p} \), \( \| \cdot \|_q = \E[|\cdot|^q]^{1/q} \) and \( \| \cdot \|_2 = \E[|\cdot|^2]^{1/2} \).
\end{theorem}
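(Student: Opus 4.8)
The plan is to carry out Gross's moving-norm proof of hypercontractivity \cite{g1975}, specialised to the finite reversible chain $X$; on a Schreier graph the chain is a jump chain, so the one step that is not pure bookkeeping will be the discrete substitute for the chain rule. Throughout I will write $\mathcal{E}(f,g):=\langle -Qf,g\rangle$ for the Dirichlet form and $\mathrm{Ent}_\pi(h):=\E[h\log h]-\E[h]\log\E[h]$ for $h\ge 0$, and take $\rho$ to be, by definition, the largest constant for which $\rho\,\mathrm{Ent}_\pi(g^2)\le 2\,\mathcal{E}(g,g)$ holds for every $g\colon S\to\mathbb{R}$ — this is the normalisation matching the factor $2$ in $\exp(2\rho t)$. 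First I would make two reductions. Since $H_t$ is a positivity-preserving contraction we have $|H_tf|\le H_t|f|$ pointwise and $\|\,|f|\,\|_p=\|f\|_p$, so it suffices to treat $f\ge 0$; and as $S$ is finite, after replacing $f$ by $f+\varepsilon$ and letting $\varepsilon\downarrow 0$ at the end I may assume $f\ge\delta>0$, which makes all of the $t$-derivatives below legitimate. Secondly, it is enough to prove the borderline case $\frac{q-1}{p-1}=e^{2\rho t}$: for general exponents pick $0\le s\le t$ with $\frac{q-1}{p-1}=e^{2\rho s}$ (possible since $q\ge p>1$), write $H_tf=H_s(H_{t-s}f)$, apply the borderline case to $H_{t-s}f$, and use that $H_{t-s}$ is an $L^p(\pi)$-contraction (Jensen together with the stationarity of $\pi$); the endpoint cases $p=1$ and $q=\infty$ then follow by taking monotone limits in the exponents.

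For the borderline case I would fix $p>1$, set $q(t):=1+(p-1)e^{2\rho t}$ — so $q(0)=p$ and $q'(t)=2\rho\bigl(q(t)-1\bigr)$ — and define $\beta(t):=\|H_tf\|_{q(t)}$. The goal is that $\beta$ is non-increasing, since then $\|H_tf\|_{q(t)}=\beta(t)\le\beta(0)=\|f\|_p$. Writing $F:=H_tf$ and $q:=q(t)$ and differentiating $\log\beta=\tfrac1q\log\E[F^q]$, using $\partial_tF=QF$ and $\langle F^{q-1},QF\rangle=-\mathcal{E}(F^{q-1},F)$, a short manipulation should give
\[
\frac{\beta'(t)}{\beta(t)}\;=\;\frac{q'}{q^2\,\E[F^q]}\,\mathrm{Ent}_\pi\bigl(F^q\bigr)\;-\;\frac{\mathcal{E}\bigl(F^{q-1},F\bigr)}{\E[F^q]},
\]
and I would then substitute $g:=F^{q/2}$, so that $g^2=F^q$ and $\mathrm{Ent}_\pi(F^q)=\mathrm{Ent}_\pi(g^2)$.

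The key ingredient is the elementary pointwise inequality
\[
(a-b)\bigl(a^{q-1}-b^{q-1}\bigr)\;\ge\;\frac{4(q-1)}{q^2}\,\bigl(a^{q/2}-b^{q/2}\bigr)^2,\qquad a,b\ge 0,\ q\ge 1,
\]
which for $a\ge b\ge 0$ and $q>1$ follows from Cauchy--Schwarz applied to $a^{q/2}-b^{q/2}=\int_b^a\tfrac q2 s^{q/2-1}\,ds$ together with $\int_b^a s^{q-2}\,ds=\tfrac{a^{q-1}-b^{q-1}}{q-1}$ (the case $b=0$ being just $(q-2)^2\ge 0$). Polarising the identity $2\langle L_uf,f\rangle=\langle L_uf,L_uf\rangle$ above and using $U=U^{-1}$, the Dirichlet form has the edge representation $\mathcal{E}(G,H)=\tfrac1{2|U|}\sum_{u\in U}\langle L_uG,L_uH\rangle$, so applying the pointwise inequality edge by edge with $a=F(X_0)$ and $b=F((X_0)_u)$ upgrades it to $\mathcal{E}(F^{q-1},F)\ge\tfrac{4(q-1)}{q^2}\,\mathcal{E}(g,g)$. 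Feeding this and $q'=2\rho(q-1)$ into the formula for $\beta'/\beta$ should give
\[
\frac{\beta'(t)}{\beta(t)}\;\le\;\frac{2(q-1)}{q^2\,\E[g^2]}\Bigl(\rho\,\mathrm{Ent}_\pi(g^2)-2\,\mathcal{E}(g,g)\Bigr)\;\le\;0
\]
by the defining property of $\rho$; hence $\beta'\le 0$ and the theorem follows.

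I expect the genuine obstacle to be precisely the pointwise convexity inequality and its edgewise application: it is the stand-in for the chain-rule identity one would have for a diffusion, it is where the constant $\tfrac{4(q-1)}{q^2}$ — and hence the exponent relation $\tfrac{q-1}{p-1}\le e^{2\rho t}$ — enters, and it is what makes the argument go through for a jump chain such as a random walk on a Schreier graph. Everything else — the two reductions, the derivative of $\log\beta$, and the limiting endpoint cases — should be routine.
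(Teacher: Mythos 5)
The paper does not actually prove Theorem~\ref{theorem: sobolev}: it is quoted as a known result with pointers to~\cite{g1975},~\cite{dsc1996} and~\cite{kkl2009}, and is then only \emph{used} (with \(q=2\)) in the proof of Theorem~\ref{theorem: main result}. What you have written is therefore not an alternative to an argument in the paper but a self-contained reconstruction of the cited result, and it is essentially the standard Gross semigroup argument as it appears in the references (cf.\ Theorem~3.5 of Diaconis--Saloff-Coste). Your reconstruction is sound: the reductions to \(f\ge\delta>0\) and to the borderline exponent \( \frac{q-1}{p-1}=e^{2\rho t} \) via the semigroup property and the \(L^p\)-contractivity of \(H_{t-s}\) are fine; the differentiation of \( \log\|H_tf\|_{q(t)} \) gives exactly the entropy term plus the Dirichlet term you state; the edge representation \( \mathcal{E}(G,H)=\frac{1}{2|U|}\sum_{u\in U}\langle L_uG,L_uH\rangle \) does follow from polarising \(2\langle L_uf,f\rangle=\langle L_uf,L_uf\rangle\) together with \(U=U^{-1}\) (the symmetry of \(U\) is genuinely needed there, and you use it); and the pointwise inequality \((a-b)(a^{q-1}-b^{q-1})\ge\frac{4(q-1)}{q^2}(a^{q/2}-b^{q/2})^2\), applied edgewise, correctly replaces the chain rule and yields \( \mathcal{E}(F^{q-1},F)\ge\frac{4(q-1)}{q^2}\,\mathcal{E}(F^{q/2},F^{q/2}) \), after which \(q'=2\rho(q-1)\) and the log-Sobolev inequality close the argument. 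You correctly identify this convexity inequality as the only non-routine step for a jump chain.

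One caveat you should make explicit: you \emph{redefine} \(\rho\) as the largest constant with \( \rho\,\mathrm{Ent}_\pi(g^2)\le 2\,\mathcal{E}(g,g) \), and it is precisely this normalisation that produces the stated exponent \(e^{2\rho t}\). The paper's own definition of the log-Sobolev constant is different: it requires \( \frac{1}{|U|}\sum_{u\in U}I_u(f)\ge\frac{\rho}{2}\Cov\bigl(f(X_0)^2,\log f(X_0)^2\bigr) \), and since \( \frac{1}{|U|}\sum_u I_u(f)=2\mathcal{E}(f,f) \) and \( \Cov(h,\log h)\ge\mathrm{Ent}_\pi(h) \), the paper's constant only guarantees \( \mathrm{Ent}_\pi(g^2)\le\frac{4}{\rho}\mathcal{E}(g,g) \); fed into your argument this yields hypercontractivity under \( \frac{q-1}{p-1}\le e^{\rho t} \), a factor of two short of the statement. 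This is a normalisation inconsistency internal to the paper (its definition does not match the convention of the sources it cites, under which the theorem as stated is correct), not a flaw in your argument, but since you announce your choice of \(\rho\) ``by definition'' you should flag that it differs from the paper's and say which convention makes the constant \(2\) in \( \exp(2\rho t) \) correct.
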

The  \emph{log-Sobolev constant} of a Schreier graph  \( {G}(S, \mathcal{G}, U) \) is the largest constant such that for all nonconstant functions \( f \colon S \to \mathbb{R} \),
\[
\frac{1}{|U|} \sum_{u \in U} I_u(f) \geq \frac{\rho}{2} \cdot \Cov \left(   f(X_0^{(n)})^2, \log((f(X_0^{(n)})^2)   \right).
\]

We will  only use Theorem~\ref{theorem: sobolev} for \( q = 2 \), in which case the only  condition on \( p \) is that \(p \geq 1 + \exp(-2\rho t)\).

In the rest of this section, we will introduce the definition of noise sensitivity used in~\cite{schramm2000} and give an analogue of this definition in the setting of Schreier graphs. The main reason for introducing this concept is to give some terminology for the behaviour of the left hand side of~\eqref{equation: main result} for sequences \( (f_n)_{n \geq 1} \), and also to give some perspective on what kind of convergence of this term we are interested in.

\begin{definition}
Let  \( (X^{(n)})_{n\geq 1} \) be a sequence of reversible and irreducible Markov chains, with state spaces \( (S^{(n)})_{n \geq 1} \) and stationary distributions \( (\pi_n)_{n \geq 1} \), and let \( (  T_n)_{n \geq 1} \) be a sequence of real positive numbers. A sequence \( (f_n )_{n \geq 1} \)  of Boolean functions with \( f_n \colon S^{(n)} \to \{ 0,1 \} \) is said to be \emph{noise sensitive} with respect to \( (  X^{(n)},T_n)_{n \geq 1} \)   if for all \( \varepsilon > 0 \)
\begin{equation}
\lim_{n \to \infty} \Cov \left(f_n(X_0^{(n)}), f_n(X_{\varepsilon T_n}^{(n)}) \right)= 0.
\label{def: possible definition}
\end{equation}
\label{definition: noise sensitivity 2}
\end{definition}%

It is easy to show that if \( (X^{(n)})_{n \geq 1} \) is a sequence of reversible and irreducible continuous time Markov chains and  \( (f_n)_{n\geq 1} \) is a sequence of Boolean functions with domains \( (S^{(n)}) \)  such that \( \lim_{n\to \infty} \Var(f_n) = 0 \), then \( (f_n)_{n\geq 1} \) will automatically be noise sensitive with respect to \( (X^{(n)})_{n \geq 1} \). For this reason, a common restriction is to consider only sequences of Boolean functions which satisfy
\[
\lim_{n\to \infty} \Var(f_n) > 0.
\]
If this holds for  a sequence \( (f_n)_{n\geq 1} \) of Boolean functions, we say that \( (f_n)_{n\geq 1} \) is \emph{nondegenerate}. This property holds in both the previously given examples.

Using the eigenvalues of the generator \(-Q_n \), we will now give another characterization of noise sensitivity. Both this lemma and its proof are completely analogous to the first part of Theorem~1.9 in~\cite{schramm2000}.

\begin{lemma}
A sequence of Boolean  functions \( ( f_n)_{n \geq 1 } \), \( f_n \colon S^{(n)} \to \{ 0,1 \} \), is noise sensitive with respect to \( ( X^{(n)},T_n )_{n \geq 1}  \) if and only if for all \( \varepsilon > 0 \),
\begin{equation}
\lim_{n \to \infty} \sum_{i=1}^{|S^{(n)}|-1} e^{-\varepsilon T_n\lambda _i^{(n)}} \hat f_n(i)^2 = 0.
\end{equation}
Consequently, \( ( f_n)_{n \geq 1 } \) is noise sensitive with respect to \( ( X^{(n)},T_n )_{n \geq 1}  \) if and only if for all \( k > 0 \),
\begin{equation}
\lim_{n \to \infty} \sum_{i \colon \lambda_i^{(n)} < k /T_n} \hat f_n(i)^2=0.
\label{eq: epsilon dependencyIII}
\end{equation}
\label{lemma:noise_sensitivity_equiv}
\label{lemma: general finite sums}
\end{lemma}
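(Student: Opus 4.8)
The plan is to obtain the first equivalence as an immediate consequence of the spectral formula~\eqref{equation: covariance} for the covariance, and then to deduce the second equivalence from the first by a standard high/low frequency truncation.

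For the first equivalence I would simply apply~\eqref{equation: covariance} with \( t = \varepsilon T_n \) to get
\[
\Cov\left(f_n(X_0^{(n)}), f_n(X_{\varepsilon T_n}^{(n)})\right) = \sum_{i=1}^{|S^{(n)}|-1} e^{-\varepsilon T_n \lambda_i^{(n)}} \hat f_n(i)^2,
\]
so that Definition~\ref{definition: noise sensitivity 2} becomes literally the statement that this sum tends to \( 0 \) for every \( \varepsilon > 0 \); there is nothing more to prove here.

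For the second equivalence I would argue the two implications separately. Assume first that \( \sum_{i} e^{-\varepsilon T_n \lambda_i^{(n)}} \hat f_n(i)^2 \to 0 \) for every \( \varepsilon>0 \), and fix \( k>0 \). Since \( \lambda_i^{(n)} < k/T_n \) forces \( e^{-T_n\lambda_i^{(n)}} > e^{-k} \), one has
\[
\sum_{i:\,\lambda_i^{(n)} < k/T_n} \hat f_n(i)^2 \le e^{k}\sum_{i=1}^{|S^{(n)}|-1} e^{-T_n\lambda_i^{(n)}}\hat f_n(i)^2,
\]
which goes to \( 0 \) by the hypothesis with \( \varepsilon = 1 \). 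Conversely, assume \( \sum_{i:\,\lambda_i^{(n)}<k/T_n}\hat f_n(i)^2 \to 0 \) for every \( k>0 \), fix \( \varepsilon>0 \), and split the sum over \( i \) at the threshold \( \lambda_i^{(n)} = k/T_n \): on the low part bound \( e^{-\varepsilon T_n\lambda_i^{(n)}}\le 1 \), on the high part bound \( e^{-\varepsilon T_n\lambda_i^{(n)}}\le e^{-\varepsilon k} \), and use \( \sum_{i\ge 1}\hat f_n(i)^2 = \Var(f_n) \le 1 \), obtaining
\[
\sum_{i=1}^{|S^{(n)}|-1} e^{-\varepsilon T_n\lambda_i^{(n)}}\hat f_n(i)^2 \le \sum_{i:\,\lambda_i^{(n)}<k/T_n}\hat f_n(i)^2 + e^{-\varepsilon k}.
\]
Given \( \delta > 0 \) I would first choose \( k \) with \( e^{-\varepsilon k} < \delta/2 \) and then \( n \) large enough that the remaining sum is below \( \delta/2 \), which shows the left-hand side tends to \( 0 \).

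There is essentially no serious obstacle here — the argument is soft. The only points needing a little care are keeping the inequality between \( e^{-\varepsilon T_n\lambda_i^{(n)}} \) and \( e^{-\varepsilon k} \) pointed the right way on each side of the threshold, and invoking Parseval in the form \( \sum_{i\ge 1}\hat f_n(i)^2 = \Var(f_n) \) together with the trivial bound \( \Var(f_n)\le 1 \) for Boolean \( f_n \), which is exactly what makes the high-frequency tail uniformly small and lets the truncation go through.
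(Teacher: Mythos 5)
Your proposal is correct and follows essentially the same route as the paper: the first equivalence is read off directly from the spectral covariance formula~\eqref{equation: covariance} with \( t = \varepsilon T_n \), and the second follows by truncating the spectrum at \( \lambda_i^{(n)} = k/T_n \). The only difference is that the paper dismisses the truncation step as ``easy to see'' while you write it out explicitly (with the correct directions of the exponential bounds and the Parseval bound \( \sum_{i \geq 1} \hat f_n(i)^2 = \Var(f_n) \leq 1 \)), which is a faithful filling-in of the omitted detail rather than a different argument.
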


%\begin{remark}
%An important consequence of this lemma is that for any sequence of reversible and irreducible Markov chains \( (X^{(n)})_{n \geq 1} \), and any sequence of times \( (T_n)_{n \geq 1} \) such that \( \lim_{n \to \infty} T_n/t_{rel}^{(n)} = \infty\), any nondegenerate sequence of Boolean functions will be noise sensitive with respect to \( (X^{(n)})_{n \geq 1 } \).
%\end{remark}

\begin{proof}[Proof of Lemma~\ref{lemma:noise_sensitivity_equiv}]

\begin{comment}

Then for any \( t \geq 0 \),
\begin{equation*}
\begin{split}
\E \left[ f_n(X_0^{(n)})f_n(X_{t}^{(n)}) \right]
&=
\E \left[f_n(X_0^{(n)}) \E \left[ f_n(X_t^{(n)}) \mid X_0^{(n)} \right] \right]
\\&=
\E \left[f_n(X_0^{(n)})H^{(n)}_{t} f_n(X_0^{(n)}) \right]
\\&=
\left\langle f_n , H^{(n)}_{t} f_n  \right\rangle.
\end{split}
\end{equation*}
Writing \( f_n = \sum_{i=0}^{|S^{(n)}|-1} \hat f_n(i) \psi_i^{(n)}\), we see that
\begin{equation*}
\begin{split}
\left\langle f_n , H^{(n)}_{t} f_n  \right\rangle
&=
\left\langle \sum_{i=0}^{|S^{(n)}|-1}\hat f_n(i) \psi_i^{(n)}, \sum_{j=0}^{|S^{(n)}|-1} \hat f_n(j) H^{(n)}_{t} \psi^{(n)}_j \right\rangle
\\&=
\left\langle \sum_{i=0}^{|S^{(n)}|-1}\hat f_n(i) \psi_i^{(n)}, \sum_{j=0}^{|S^{(n)}|-1} \hat f_n(j) e^{ -   \lambda_j^{(n)} t }\psi^{(n)}_j  \right\rangle
\\&=
\sum_{i,j} e^{{- \lambda_j^{(n)} t}   } \hat f_n(i) \hat f_n(j)   \left\langle \psi_i^{(n)}, \psi^{(n)}_j  \right\rangle.
\end{split}
\end{equation*}
Using that \( \left \langle \psi_i^{(n)}, \psi_j^{(n)} \right\rangle = 1_{i = j} \), and summing up, we obtain
\begin{equation*}
\begin{split}
\E \left[ f_n(X_0^{(n)})f_n(X_{t}^{(n)}) \right]
=
\sum_{i=0}^{\mathclap{|S^{(n)}|-1}} e^{{- \lambda_i^{(n)} t} } \hat f_n(i)^2 \!.
\end{split}
\end{equation*}

Here the last equality follows from the fact that   \( \{ \psi_i^{(n)} \}_{i = 0}^{|S^{(n)}|-1} \) is an orthonormal set.
%
As \( \psi_0^{(n)} \equiv 1 \),
\[ \E \left[ f_n (X_0^{(n)}) \right]^2 = \left\langle f_n, \psi_0^{(n)} \right\rangle^2 =  \hat f_n(0)^2,
\]
\end{comment}

Fix \( \varepsilon > 0 \) and let  \( t = \varepsilon T_n \). Then from~\ref{equation: covariance} it follows that 
\begin{equation}
\begin{split}
\Cov( f_n(X_0^{(n)}), f_n(X_{\varepsilon T_n}^{(n)}))  
&= 
\E \left[ f_n(X_0^{(n)})f_n(X_{\varepsilon T_n}^{(n)}) \right] -\E \left[ f_n (X_0^{(n)}) \right]^2  
\\&= 
\sum_{j=1}^{\mathclap{|S^{(n)}|-1}} e^{{-\varepsilon \lambda_j^{(n)}T_n} } \hat f_n(j)^2 .
\label{IIIequation: spectral formulation of noise sensitivity}
\end{split}
\end{equation}
For any \( \varepsilon > 0 \), it is easy to see that the left hand side of~\eqref{IIIequation: spectral formulation of noise sensitivity} tends to zero as \( n \to \infty \) if and only if~\eqref{eq: epsilon dependencyIII} holds. From this the desired conclusion follows.
\end{proof}

\begin{remark}
By the last lines of the proof above it follows that  if a sequence of functions satisfies~\eqref{def: possible definition} for one \( \varepsilon > 0 \), then it does so for all \( \varepsilon > 0 \), i.e. the proof of Lemma~\ref{lemma:noise_sensitivity_equiv} in fact shows that a sequence of Boolean functions \( (f_n)_{n \geq 1} \) is noise sensitive with respect to \( (X^{(n)},T_n)_{n \geq 1} \) if and only if
\begin{equation*}
\lim_{n \to \infty}\Cov( f_n(X_0^{(n)}), f_n(X_{ T_n}^{(n)}))  = 0.
\end{equation*}
\end{remark}

The concept of noise sensitivity relates to our main result as from this it follows that a sequence of Boolean functions \( f_n \colon S^{(n)} \to \{ 0,1 \} \) is noise sensitive  if 
\[
\lim_{n \to \infty}\sum_{u \in U_n} I_u(f_n)^2.
\]
In some cases, this makes it simpler  to show that a sequence of Boolean functions is noise sensitive, as it reduces a question about a process to a question about the geometry of a function, as the influences do not depend on time. In~\cite{schramm2000}, this was the idea that enabled Benjamini, Kalai and Schramm to show that the sequence of indicator functions of percolation crossings of a sequence of rectangles of increasing size is sensitive to noise.

\section{Proof of main result}

The main purpose of this section is to give a proof of Theorem~\ref{theorem: main result}. To this end, we will first state and prove a lemma that will be needed in  the proof. This result provides an analogue of a result stating that for the Hamming cube \( \Omega_{2,n} \), for any \( f \colon \{ 0,1 \}^n \to \{ 0,1 \} \) and any \( i \in \{ 1,2,\ldots, n \} \), 
\begin{equation}
2 \lambda_i^{(n)} |U_n| \hat f(i)^2  = \sum_{u \in U} \langle L_u  f , \psi_i^{(n)} \rangle^2
\label{IIIequation: stronger version of lemma on the Hamming cube}
\end{equation}
(see e.g. the proof of Proposition V.7 in~\cite{gs2014}).
%\noteMalin[inline]{In the notation of GS, noting that  \( |U_n| = n \) and that \( \lambda^{(n)}_i = 2|S|/n \) if \( \psi_i \equiv \chi_S \),  this says that
%\[
%4|S| \langle f, \psi_i \rangle^2  = \sum_{k=1}^{n} \langle \nabla_k f , \chi_S \rangle^2 
%\]
%}

Whenever \( \Psi = \Psi_\lambda \) is the span of all eigenvectors corresponding to some eigenvalue \( \lambda \), we say that \( \Psi \) is the eigenspace corresponding to \( \lambda \). Using this notation, we can now state the revised form of~\eqref{IIIequation: stronger version of lemma on the Hamming cube}.

\begin{lemma}
\label{lemma:the improved first inequality} % a new version of {lemma:the_first_inequality}
Let \( G (S,\mathcal{G},U) \) be a Schreier graph and let \( f\colon S\to \{ 0,1 \} \) be a Boolean function with domain \( S \). Let \( Q \) be the generator of the corresponding random walk. Further, let \( \Psi_{\lambda} \) be an eigen\-space of \( -Q \) and let \( \psi_1, \ldots, \psi_m \) be  an orthonormal basis  of \( \Psi_\lambda \). Then
\begin{equation}
\label{eq: geometric result}
\sum_{i \in \{ 1, \ldots, m \}} 2 \lambda |U| \langle f, \psi_i \rangle^2 = \sum_{i \in \{ 1, \ldots, m \}}  \sum_{u \in U} \langle L_u f, \psi_i \rangle^2\!\!.
\end{equation}
\end{lemma}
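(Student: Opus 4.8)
The plan is to compute both sides by expanding in the eigenbasis and exploiting the relation $-Q = \frac{1}{|U|}\sum_{u\in U} L_u$ together with the self-adjointness of each $L_u$. First I would observe that each $L_u$ is self-adjoint with respect to $\langle\cdot,\cdot\rangle_\pi$: indeed $\langle L_u f, g\rangle = \E[(f(X_0)-f((X_0)_u))g(X_0)]$, and since $(X_0)_u$ has the same law as $X_0$ (because $\pi$ is uniform and the map $x\mapsto x_u$ is a bijection of $S$, being the action of a group element), a change of variables gives $\langle L_u f, g\rangle = \langle f, L_u g\rangle$. This is exactly the same kind of symmetrization already carried out in the excerpt for the identity $2\langle L_u f, f\rangle = \langle L_u f, L_u f\rangle$.

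Next I would attack the right-hand side. Using self-adjointness, $\langle L_u f, \psi_i\rangle = \langle f, L_u \psi_i\rangle$, so
\[
\sum_{i=1}^m \sum_{u\in U} \langle L_u f, \psi_i\rangle^2 = \sum_{u\in U}\sum_{i=1}^m \langle f, L_u\psi_i\rangle^2.
\]
The key structural point is that $\Psi_\lambda$ is invariant under each $L_u$. This follows because $L_u$ commutes with $-Q$: the operators $\{L_u\}_{u\in U}$ all commute with $\sum_{u'} L_{u'} = -|U|Q$ — more carefully, one checks $L_u L_{u'} = L_{u'} L_u$ need not hold, but what we really need is just that $L_u$ preserves each eigenspace of $Q$; this I would get from the fact that $L_u = I - P_u$ where $P_u$ is the permutation operator $P_u f = f(\cdot_u)$, and $P_u$ commutes with $-Q = \frac1{|U|}\sum_{u'}(I-P_{u'})$ iff $P_u$ commutes with $\sum_{u'}P_{u'}$, which holds because left-multiplication by $u$ on the generating set $U$ permutes $U$ (as $U$ is a group-conjugation-stable... — here I must be careful). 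Actually the cleanest route: the random walk generator $-Q$ is $I$ minus the transition operator, and $P_u$ for $u\in U$ need not commute with $Q$ in general. So instead I would argue directly: since $\{\psi_i\}_{i=1}^m$ is an orthonormal basis of $\Psi_\lambda$, we have $\sum_{i=1}^m \langle f, L_u\psi_i\rangle^2 = \|\Pi_\lambda L_u^* f\|^2 = \|\Pi_\lambda L_u f\|^2$ where $\Pi_\lambda$ is orthogonal projection onto $\Psi_\lambda$ — wait, this uses $L_u^* = L_u$ and $L_u\Psi_\lambda \subseteq \Psi_\lambda$.

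The honest version of the argument: expand $f = \sum_j \hat f(j)\psi_j$ and compute the left side as $2\lambda|U|\sum_{i=1}^m \hat f(i)^2$. For the right side, use $2\langle L_u f, f\rangle = \langle L_u f, L_u f\rangle$ applied not to $f$ but componentwise: write $\sum_{u} \langle L_u f, L_u f\rangle = 2\sum_u \langle L_u f, f\rangle = 2\langle \sum_u L_u f, f\rangle = 2|U|\langle -Qf, f\rangle = 2|U|\sum_j \lambda_j \hat f(j)^2$. This handles the sum over ALL of $U$ and ALL Fourier modes at once. To localize to the eigenspace $\Psi_\lambda$, I would apply this identity to $g := \Pi_\lambda f = \sum_{i=1}^m \hat f(i)\psi_i$ in place of $f$: the left-hand side becomes $2|U|\lambda\sum_{i=1}^m\hat f(i)^2$, matching the LHS of \eqref{eq: geometric result}; for the right-hand side I get $\sum_u \langle L_u g, L_u g\rangle$, and I must show this equals $\sum_u\sum_{i=1}^m\langle L_u f,\psi_i\rangle^2$. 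Now $\sum_{i=1}^m\langle L_u f,\psi_i\rangle^2 = \|\Pi_\lambda L_u f\|^2$, and $\langle L_u g, L_u g\rangle = \|L_u \Pi_\lambda f\|^2$, so the two agree provided $L_u\Pi_\lambda = \Pi_\lambda L_u$, i.e. $\Psi_\lambda$ is $L_u$-invariant. Thus the whole proof reduces to this single invariance claim.

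The main obstacle, then, is establishing that each $L_u$ (equivalently each $P_u$) preserves the eigenspaces of $Q$. I expect this is where the structure of Schreier graphs genuinely enters, and I would prove it as follows: the operators $\{P_u : u\in U\}$ together with $Q$ live inside the group algebra action; more concretely, $P_u P_v = P_{vu}$ (composition of the action maps), so $\sum_{v\in U} P_v$ composed with $P_u$ gives $\sum_{v\in U}P_{vu} = \sum_{w\in U}P_w$ since $v\mapsto vu$ is a bijection of $U$ onto $U$ (using $U = U^{-1}$ is not even needed — just that right-multiplication by $u$ permutes the group, and $uU$... hmm, $vu$ ranges over $Uu$, not $U$). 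So in fact $P_u$ does NOT commute with $\sum_{v\in U}P_v$ in general, and eigenspaces of $Q$ need not be $L_u$-invariant. This means the reduction above is too optimistic, and the correct proof must instead sum over $u$ first before projecting: I would write $\sum_u\sum_{i=1}^m\langle L_uf,\psi_i\rangle^2$, use self-adjointness to get $\sum_{i=1}^m\sum_u\langle f,L_u\psi_i\rangle^2$, and then compute $\sum_u\langle f, L_u\psi_i\rangle\langle f,L_u\psi_j\rangle$; summing the "diagonal in $i$" and using $\sum_u L_u\psi_i$ against things is what \eqref{equation: covariance}-style bookkeeping handles. The real content is the polarization identity $2\langle L_u\psi_i,\psi_j\rangle = \langle L_u\psi_i,L_u\psi_j\rangle$ (valid for any two functions, by the same computation as in the excerpt), which lets one replace $\sum_u\langle L_uf,\psi_i\rangle^2 = \sum_u\langle f,L_u\psi_i\rangle^2$ by an expression involving $\langle f, \sum_u L_u\psi_i\rangle = \langle f, |U|\lambda\psi_i\rangle = |U|\lambda\hat f(i)$ — but that only controls the case where the cross-terms vanish, which is precisely the $\Psi_\lambda$-invariance issue resurfacing. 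I would therefore ultimately expect the proof to require: apply the polarization identity $\langle L_u f, L_u\psi_i\rangle = 2\langle L_u f,\psi_i\rangle$ (taking $g = L_u f$ there is not right either); the cleanest correct line is $\sum_u\langle L_u f,\psi_i\rangle^2 = \sum_u\langle L_uf,\psi_i\rangle\langle f,L_u\psi_i\rangle$, sum over $i\in\{1,\dots,m\}$, and recognize $\sum_i \psi_i\langle f, L_u\psi_i\rangle = \Pi_\lambda L_u f$ only if $L_u$ preserves $\Psi_\lambda$ — so I concede the invariance lemma is unavoidable and is the crux; I would prove it via the observation that the *full* symmetry group generated by all $P_w$, $w\in U$, has $\Psi_\lambda$ as an invariant subspace, and each $P_u$ lies in that group, hence preserves $\Psi_\lambda$ after all (because $P_u P_v = P_{vu}$ shows the $P_w$'s are closed under composition up to the generated group, and $Q$ commutes with this whole group since it is built symmetrically from it — this last point needs that the *group* generated by $\{$translations$\}$ commutes with $Q$, which does hold). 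That bracketed chain is the step I would spend the most care on.
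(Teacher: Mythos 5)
Your proposal ultimately lands on the same crux as the paper's proof, and the overall reduction is essentially the paper's: everything comes down to the claim that the translation \( P_u\psi \coloneqq \psi(\cdot_u) \) maps \( \Psi_\lambda \) onto itself, after which one concludes with Parseval's identity; the paper isolates exactly this statement as Lemma~\ref{lemma: basis after rotation}, and obtains \eqref{eq: geometric result} by expanding \( \langle L_u f,\psi_i\rangle^2 \) and using \( \sum_{u\in U}L_u=-|U|Q \), which is close to your ``honest version''. One detail needs repair along the way: an individual \( L_u \) is \emph{not} self-adjoint in general; the change of variables only gives \( \langle L_uf,g\rangle=\langle f,L_{u^{-1}}g\rangle \), i.e.\ \( L_u^*=L_{u^{-1}} \), so your blanket claim \( L_u^*=L_u \) is false unless \( u \) acts as an involution. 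This is harmless in the end because you always sum over \( u \) and \( U=U^{-1} \) — the paper handles it by writing \( \langle f_u,\psi_i\rangle=\langle f,\psi_{i,u^{-1}}\rangle \) and re-indexing the sum over \( u \) — but it should be stated that way.

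The genuine gap is the invariance claim itself, which you correctly identify as unavoidable but then do not prove. Mid-proposal you observe, correctly, that \( P_u \) need not commute with \( \sum_{v\in U}P_v \), since composing with \( P_u \) sends the generating set to a coset rather than back to \( U \); yet your closing argument asserts that ``\( Q \) commutes with the whole group generated by the \( P_w \)\ldots which does hold''. It does not: \( [P_u,Q]=0 \) amounts exactly to the sum \( \sum_{v\in U}P_v \) being fixed when each generator is conjugated by \( u \) — precisely the issue you had just raised — and is not a consequence of \( Q \) being ``built symmetrically'' from the generators (it is automatic when \( \mathcal{G} \) is abelian or \( U \) is a union of conjugacy classes, as in the paper's examples, but it is not a formal triviality). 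So the decisive step in your write-up is asserted rather than proved, and the argument is circular there. What you need is the content of the paper's Lemma~\ref{lemma: basis after rotation}: that \( \psi_{j,u} \) is again a \( \lambda \)-eigenvector of \( -Q \), which the paper derives by expanding \( -Q\psi_{j,u} \) in the rotated orthonormal basis \( \{\psi_{k,u}\}_k \) and using \( \langle -Q\psi_{j,u},\psi_{k,u}\rangle=\langle -Q\psi_j,\psi_k\rangle \). Until you supply or cite a statement of that kind, the key identity \( \sum_{u\in U}\sum_{i=1}^m\langle f_u,\psi_i\rangle^2=|U|\sum_{i=1}^m\langle f,\psi_i\rangle^2 \) (the paper's \eqref{IIIequation:projection}), equivalently the step \( \sum_{u}\|\Pi_\lambda L_uf\|^2=\sum_u\|L_u\Pi_\lambda f\|^2 \) in your formulation, has no justification and the proof is incomplete.
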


\begin{remark}
\eqref{IIIequation: stronger version of lemma on the Hamming cube} is clearly stronger than Lemma~\ref{lemma:the improved first inequality}, as the latter contains an additional sum over an eigenspace. The reason why we cannot apply the same proof as for the Hamming cube is  that the proof in that case  relies heavely on knowing the eigenvectors of the random walk, whereas in a more general setting, these are not known, requiring an altogether different proof.  In fact, \eqref{IIIequation: stronger version of lemma on the Hamming cube} does not hold for all Schreier graphs. To see this, consider the Schreier graph \( G  (S_4, \, S_4,\,  \{ (12), (13), (14), (23), (24), (34)\}  \), i.e. the graph generated by the transpositions of the symmetric group of order four. Then it is straight forward to check that the function
\begin{equation*}
\psi (\sigma) = -1+2 \cdot \mathbf{1}_{\sigma(1)\in \{ 1,2 \} }
\end{equation*} 
is an eigenvector with eigenvalue \(\lambda =  2/3 \) to \( -Q\). Set \( f(\sigma) = \mathbf{1}_{\sigma(1) = 1} \). Then \( \langle f, \psi \rangle = \frac{1}{4} \), implying that the summand in the left hand side of~\eqref{IIIequation: stronger version of lemma on the Hamming cube} corresponding to this eigenvector is
\begin{equation*}
 2 \lambda |U| \langle f, \psi \rangle^2 = 2 \cdot \frac{2}{3} \cdot { 4 \choose 2 } \cdot \left( \frac{1}{4}\right)^2 = \frac{1}{2}.
\end{equation*}
For each term in the inner sum on the right hand side of the same equation we have \( \langle L_{(ij)} f, \psi \rangle = \frac{1}{4} \) if \( i = 1 \) (we assume \( i < j \)) and 0 else, implying that the  sum on the right hand side of~\eqref{IIIequation: stronger version of lemma on the Hamming cube} equals
\begin{equation*}
\sum_{(ij) \in U} \langle L_{ (ij)} f, \psi \rangle^2 = 3 \cdot \left( \frac{1}{4} \right)^2 = \frac{3}{16}
\end{equation*}
which clearly does not equal \( \frac{1}{2} \). 
\end{remark}

To be able to give a proof of Lemma~\ref{lemma:the improved first inequality}, we will need the following lemma.

\begin{lemma}
For any eigenspace \( \Psi_\lambda\) of \( -Q \) , any orthonormal basis \( \psi_1 \), \ldots, \(\psi_m \) of \( \Psi_{\lambda} \) and any \( u \in U \), the set \( \{ \psi_{i,u}\}_{i\in \{ 1, \ldots, m} \} \), \( \psi_{i,u} (w) \coloneqq \psi_i(w_u) \), is also an orthonormal basis for \( \Psi_\lambda \).
\label{lemma: basis after rotation}
\end{lemma}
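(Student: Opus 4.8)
The plan is to recognise $\psi_{i,u}$ as $R_u\psi_i$, where $R_u$ is the ``substitution'' operator on real-valued functions on $S$ defined by $(R_u g)(w)=g(w_u)$. With this notation $L_u=I-R_u$ and hence $-Q=\frac1{|U|}\sum_{v\in U}L_v=I-\frac1{|U|}\sum_{v\in U}R_v$. The proof then splits into two steps: showing that $R_u$ is an isometry of $L^2(\pi)$, and showing that $R_u$ maps $\Psi_\lambda$ into itself. Together these give the claim, since an orthonormal family of $\dim\Psi_\lambda$ vectors contained in $\Psi_\lambda$ is automatically an orthonormal basis of $\Psi_\lambda$.

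For the first step, note that since $\mathcal{G}$ acts on $S$, the map $\phi_u\colon w\mapsto w_u$ is a bijection of $S$, and as $\pi$ is the uniform measure this bijection is measure preserving. Hence for all $g,h\colon S\to\mathbb{R}$,
\[
\langle R_u g,R_u h\rangle=\E\bigl[g((X_0)_u)\,h((X_0)_u)\bigr]=\E\bigl[g(X_0)h(X_0)\bigr]=\langle g,h\rangle,
\]
so $R_u$ is unitary and $\{\psi_{i,u}\}_{i=1}^m=\{R_u\psi_i\}_{i=1}^m$ is an orthonormal family of $m$ functions. It therefore only remains to check that each $R_u\psi_i$ lies in $\Psi_\lambda$.

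For the second step I would show that $R_u$ commutes with $-Q$ (equivalently with every $H_t$): this immediately forces $R_u(\Psi_\lambda)\subseteq\Psi_\lambda$, because $(-Q)\psi_i=\lambda\psi_i$ then gives $(-Q)(R_u\psi_i)=R_u((-Q)\psi_i)=\lambda R_u\psi_i$. Using $-Q=I-\frac1{|U|}\sum_{v\in U}R_v$, it suffices to show that $R_u$ commutes with $\sum_{v\in U}R_v$, which is a scalar multiple of the adjacency operator of the Schreier graph; and this holds as soon as $\phi_u\colon w\mapsto w_u$ is a graph automorphism of $G(S,\mathcal{G},U)$, i.e.\ carries the edge set of $G$ onto itself. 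Combining this with the previous paragraph then yields the lemma.

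The isometry step is routine, so the crux is verifying that $\phi_u$ is a symmetry of the Schreier graph, equivalently that $R_u$ commutes with the adjacency operator. This is transparent when the generating set $U$ is invariant under conjugation: for the standard generators of $\Omega_{m,n}$ in Example~\ref{example: hypercubes} the group is abelian so all of the $R_v$ commute, and for the transpositions generating the Johnson graph $J_{n,m_n}$ in Example~\ref{example: slices} conjugating a transposition by $u$ again lands in $U$, so $\phi_u$ permutes the edges coming from each $v\in U$ among themselves. Establishing this commutation is the main obstacle in general, and I would approach it by comparing $R_uR_v$ with $R_vR_u$ and using crucially that $\pi$ is uniform; this is precisely the point at which the proof departs from the Hamming-cube argument recalled before Lemma~\ref{lemma:the improved first inequality}, where the eigenvectors are known explicitly.
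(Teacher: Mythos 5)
Your first step (orthonormality) is correct and is exactly what the paper does: \( w\mapsto w_u \) is a bijection of \( S \) and \( \pi \) is uniform, so \( \langle\psi_{i,u},\psi_{j,u}\rangle=\langle\psi_i,\psi_j\rangle \). The genuine gap is your second step. You reduce the lemma to the claim that the substitution operator \( R_u \) (so \( \psi_{i,u}=R_u\psi_i \)) commutes with \( -Q \), equivalently that \( w\mapsto w_u \) is an automorphism of the Schreier graph, but you never prove this claim: you verify it only in the two examples and explicitly defer the general case, and that deferred claim is the entire content of the lemma. Moreover, the route you sketch cannot close it: \( R_uR_v \) and \( R_vR_u \) are the substitutions by \( w\mapsto w_{uv} \) and \( w\mapsto w_{vu} \) (in one order or the other, depending on the convention for the action), so they differ as soon as \( u \) and \( v \) do not commute, and uniformity of \( \pi \) plays no role here. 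The relevant condition is \( u^{-1}Uu=U \), which makes conjugation by \( R_u \) permute the terms of \( \sum_{v\in U}R_v=|U|(I+Q) \) and hence gives the commutation.

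You should also know that the paper's own proof passes over exactly this point: the equality \( \langle -Q\psi_{j,u},\psi_{k,u}\rangle=\langle -Q\psi_j,\psi_k\rangle \) in its chain of identities is precisely \( R_u^{*}(-Q)R_u=-Q \), i.e.\ the commutation you left open, and it is asserted there without justification. In fact, with only the stated hypotheses (\( U \) symmetric and generating) the statement can fail: take \( S=\mathcal{G}=S_3 \) acting on itself and \( U=\{(12),(13)\} \). The Schreier graph is a \( 6 \)-cycle whose edges alternate between the two generators; the map \( w\mapsto w_{(12)} \) swaps the two endpoints of each of the three \( (12) \)-edges, which sends some edges to diagonals of the hexagon and so is not a graph automorphism, and a short computation shows it maps an eigenvector for the eigenvalue \( 1/2 \) of \( -Q \) to a function outside that eigenspace. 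So your instinct about where the difficulty lies is accurate, but it cannot be overcome in the stated generality: both your argument and the paper's need the extra hypothesis that \( U \) is invariant under conjugation by \( u \) (as it is for \( \Omega_{m,n} \), where the group is abelian, and for the Johnson graphs, where \( U \) consists of all transpositions), and under that hypothesis your commutation argument does complete the proof.
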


\begin{proof}
As 
\begin{equation*}
\langle \psi_{i,u}, \psi_{j,u}\rangle = \langle \psi_{i}, \psi_{j}\rangle
\end{equation*}
it is immediately clear that \( \{ \psi_{i,u} \}_{i} \) is an orthonormal set, so it remains to show that \( \psi_{j,u} \in \Psi_\lambda = \Span \{ \psi_i \}_{i} \) for any   \(j \in \{ 1, \ldots, m\} \). To obtain this, it is enough to show that \( \psi_{j,u} \) is an eigenvector of \( -Q \) with eigenvalue \( \lambda \). However this is immidiate, as
\begin{equation*}
\begin{split}
-Q \psi_{j,u} 
&= 
\sum_{k=0}^{|S^{(n)}|-1} \langle -Q \psi_{j,u}, \psi_{k,u} \rangle \psi_{k,u}
=
\sum_{k=0}^{|S^{(n)}|-1}  \langle -Q \psi_{j}, \psi_{k} \rangle \psi_{k,u}
\\&=
\sum_{k=0}^{|S^{(n)}|-1}  \langle \lambda \psi_{j}, \psi_{k} \rangle \psi_{k,u}
=
\sum_{k=0}^{|S^{(n)}|-1}  \lambda \langle \psi_{j}, \psi_{k} \rangle \psi_{k,u}
=
\lambda \psi_{j,u}.
\end{split}
\end{equation*}
\end{proof}

\begin{proof}[Proof of Lemma~\ref{lemma:the improved first inequality}]
For any \( f \colon S \to \{ 0,1 \} \) and any \( u \in U \), define the function \( f_u \colon S \to \{ 0,1 \} \) by \({ f_u(w) \coloneqq f(w_u)} \). Then, as \( L_u f = f - f_u \), for any \( i \in \{ 0,1, \ldots, |S|-1\}\) we have
\begin{equation*}
\begin{split}
\sum_{u \in U} \langle L_u f, \psi_i \rangle^2 
&=
\sum_{u\in U} \left\{ \langle f, \psi_i \rangle^2 - 2\langle f, \psi_i \rangle \langle f_u, \psi_i \rangle + \langle f_u, \psi_i \rangle^2 \right\}
\\&=
\sum_{u\in U} \left\{ 2\langle f, \psi_i \rangle\langle L_u f, \psi_i \rangle + \langle f_u, \psi_i \rangle^2 - \langle f, \psi_i \rangle^2 \right\}.
\end{split}
\end{equation*}
Morover, using that
\begin{equation*}
\begin{split}
\sum_{u \in U} \Bigl\langle L_u f, \psi_i \Bigr\rangle &= \Bigl\langle \sum_{u \in U} L_u f, \psi_i \Bigr\rangle 
= \Bigl\langle |U| -Q f, \psi_i \Bigr\rangle 
= |U| \Bigl\langle -Q \sum_j \hat f(j)  \psi_j, \psi_i \Bigr\rangle
\\&= |U|\sum_j \hat f(j)   \Bigl\langle -Q  \psi_j, \psi_i \Bigr\rangle
= |U| \sum_j \hat f(j) \lambda  \Bigl\langle \psi_j, \psi_i \Bigr\rangle
=\lambda |U|  \hat f(i) 
\end{split}
\end{equation*}
it follows that 
\begin{equation*}
\begin{split}
\sum_{u \in U} \langle L_u f, \psi_i \rangle^2  &= 
2 \lambda |U| \hat f(i)^2 + \sum_{u\in U} \left\{ \langle f_u, \psi_i \rangle^2 - \langle f, \psi_i \rangle^2 \right\}
\\&=
2 \lambda |U| \hat f(i)^2 + \sum_{u\in U}  \langle f_u, \psi_i \rangle^2  - |U| \langle f, \psi_i \rangle^2\!\!.
\end{split}
\end{equation*}
Now recall that what we want to prove is that 
\[
\sum_{i \in \{ 1,2, \ldots, m \} } 2\lambda |U| \langle f, \psi_i \rangle^2 = \sum_{i \in \{ 1,2, \ldots, m \}} \sum_{u \in U } \langle L_u f, \psi_i \rangle^2.
\]
This follows if we can show that
\begin{equation}
\label{IIIequation:projection}
\sum_{i \in \{ 1, \ldots, m \}}  \sum_{u\in U}  \langle f_u, \psi_i \rangle^2  =  |U|  \sum_{i \in \{ 1, \ldots, m \}}  \langle f, \psi_i \rangle^2.
\end{equation}
As all sums in~\eqref{IIIequation:projection} are finite, for the left hand side of this equation we have
\begin{equation*}
\begin{split}
\sum_{i \in \{ 1, \ldots, m \}} \sum_{u\in U}  \langle f_u, \psi_i \rangle^2  
&=
\sum_{i \in \{ 1, \ldots, m \}}  \sum_{u\in U}  \langle f(w_u) \psi_i(w) \rangle^2  
\\&=
\sum_{i \in \{ 1, \ldots, m \}}  \sum_{u\in U} \langle f(w) \psi_i(w_{u^{-1}}) \rangle^2\!\! .
\end{split}
\end{equation*}
As \( U = U^{-1} \) by  definition, it follows that 
\begin{equation*}
\begin{split}
\sum_{i \in \{ 1, \ldots, m \}} \sum_{u\in U}  \langle f_u, \psi_i \rangle^2  
&=
\sum_{i \in \{ 1, \ldots, m \}}  \sum_{u\in U}  \langle f(w) \psi_i(w_{u}) \rangle^2  
\\&= 
\sum_{i \in \{ 1, \ldots, m \}}  \sum_{u\in U}  \langle f, \psi_{i,u} \rangle^2 
\\&=  
\sum_{u\in U}  \sum_{i \in \{ 1, \ldots, m \}}  \langle f, \psi_{i,u} \rangle^2 \!\!. 
\end{split}
\end{equation*}
By Lemma~\ref{lemma: basis after rotation}  \( \{ \psi_{i,u} \}_{i \in \{ 1, \ldots, m \}} \) is an orthonormal basis for \( \Psi \) for any  \( u\in U \). By combining this fact with Parseval's identity, we obtain
\begin{equation*}
\sum_{i \in \{ 1, \ldots, m \}}\langle f, \psi_{i,u} \rangle^2 = \sum_{i \in \{ 1, \ldots, m \}} \langle f, \psi_{i} \rangle^2  
\end{equation*}
from whic~\eqref{IIIequation:projection}, and thereby the  lemma, readily follows.
\end{proof}

We are now ready to give a proof of Theorem~\ref{theorem: main result}.

\begin{proof}[Proof of Theorem~\ref{theorem: main result}]
Note first that by Lemma~\ref{lemma:the improved first inequality};
\begin{equation*}
\sum_{i\geq 1 \colon  \lambda_i < \Lambda}  \hat f(i)^2 
\leq 
\frac{1}{2\lambda_1  |U|} \sum_{i=1}^{|S|-1} \sum_{u \in U} \langle L_u f, \psi_i  \rangle^2.
\end{equation*}
As by definition, for any \( t > 0 \) we have
\[
 \| H_t (L_u f) \|_2^2 = \sum_{i=1}^{|S|-1}e^{-2\lambda_j  t} \langle L_u f, \psi_j \rangle^2 
 \]
 we get
\begin{equation*}
\sum_{i\geq 1 \colon  \lambda_i < \Lambda}  \hat f(i)^2 
\leq 
\frac{e^{\,2\Lambda t }}{2\lambda_1  |U|} \sum_{u \in U} \|H_{t}  (L_u f)\|_2^2.
\end{equation*}
By the hypercontractivity principle, 
\begin{equation*}
\| H_{t} \left( L_u f \right) \|_2^2 \leq \| L_u f \| _p^2
\end{equation*}
whenever \(p \in [1,2]\) and \( p \geq 1+\exp(-2 \rho t) \), where \(\rho \) is the log-Sobolev constant for the random walk on \( G\). Now as \( f \in \{ 0,1 \} \), we have that \( \| L_u f \| _p^2 = \| L_u f \| _2^{4/p} \). Also, \( \| L_uf \|_2^2 = I_u(f) \). Putting these two observations together yields
\begin{equation*}
\| H_t \left( L_u f \right) \|_2^2 \leq \| L_u f \| _p^2 =  \| L_u f \| _2^{4/p} =  I_u(f)^{2/p}
\end{equation*}
which in turn implies that
\begin{equation*}
\begin{split}
\sum_{i\geq 1 \colon  \lambda_i < \Lambda}  \hat f(i)^2 
&\leq 
\frac{e^{\,2\Lambda  t }}{2\lambda_1   |U|}  \sum_{u\in U}  I_u(f)^{2/p}.
\end{split}
\end{equation*}
Multiplying by one and then applying Hölder's inequality, we obtain
\begin{equation*}
\begin{split}
\sum_{i\geq 1 \colon  \lambda_i < \Lambda}  \hat f(i)^2 
&\leq 
\frac{e^{\,2\Lambda  t }}{2\lambda_1 |U|}  \sum_{u\in U}  I_u(f)^{2/p} \cdot 1  
\leq
\frac{e^{\,2\Lambda  t}}{2\lambda_1  |U|}  \left( \sum_{u\in U}  I_u(f)^2 \right)^{1/p}  \left( \sum_{u\in U}  1^{\frac{p }{p-1}} \right)^{\frac{p -1}{p}}
\\ &=
\frac{e^{\,2\Lambda  t}}{2\lambda_1   |U|}  \left( \sum_{u\in U}  I_u(f)^2 \right)^{1/p}  |U|^{\frac{p -1}{p}}
=
\frac{e^{\,2\Lambda  t }}{2\lambda_1  } \cdot   \left(\frac{ \sum_{u\in U}  I_u(f)^2 }{|U|}\right)^{1/p}.
\end{split}
\end{equation*}
Since the only restriction on \( p \) is that  \( p \geq 1+\exp(-2 \rho t) \), we can set  \( p = 1+\exp(-2 \rho t) \). Using this, the previous inequality simplifies to
\begin{equation*}
\sum_{i\geq 1 \colon  \lambda_i< \Lambda}  \hat f(i)^2 
\leq 
\frac{e^{\,2\Lambda  t }}{2\lambda_1   } \cdot   \left(\frac{ \sum_{u\in U}  I_u(f)^2 }{|U|}\right)^{1/(1+\exp(-2 \rho t)) }.
\end{equation*}
Setting  \( r \coloneqq \exp( -2\rho t)  \) we obtain
\begin{equation*}
\sum_{i\geq 1 \colon  \lambda_i < \Lambda}  \hat f(i)^2 
\leq 
\frac{e^{-\Lambda  \log (r  /\rho   ) }}{2\lambda_1   } \cdot   \left(\frac{ \sum_{u\in U}  I_u(f)^2 }{|U|}\right)^{1/(1+r) }.
\end{equation*}

If we now use~\eqref{equation: covariance}, the desired inequality readily follows

\end{proof}

\section{Applying the noise sensitivity theorem to exclusion sensitivity}

The purpose of this section is to use the main result in the previous section, Theorem~\ref{theorem: main result},  to give a proof of a weaker version of Theorem 1.14 in~\cite{bgs2013}, which connects influences on \( \Omega_{2,n} \) to a concept which the authors call \emph{exclusion sensitivity}. To this end, we first define what we mean by exclusion sensitivity.
Let \( (f_n)_{n \geq 1 } \) be a sequence of Boolean functions with domain \( \mathbb{Z}_2^n \). Pick \( {X_0^{(n)}    }\) according to \(\pi_n\), where \( \pi_n \) is the uniform measure probability measure on \( \mathbb{Z}_2^n \), and let \( X^{(n)} \) be a random walk on \( J_{n,\|X_0^{(n)}\|} \), where \( J_{n,\|X_0^{(n)}\|} \), is the graph defined in Example~\ref{example: slices} . Equivalently, \( X^{(n)} \) is a random walk on \( \bigcup_{m = 0}^n J_{n,m}\) with \( X_0^{(n)} \sim \pi_n\).  We  say that \( (f_n )_{n \geq 1 } \) is \emph{exclusion sensitive} if for all \( \alpha > 0 \),
\[
\lim_{n \to \infty} \Cov(f_n(X_0^{(n)}), f_n(X^{(n)}_{\varepsilon n })) = 0.
\]

In~\cite{bgs2013}, Broman, Garban and Steif proved the following result.

\paragraph{\textbf{Theorem 1.14 in~\cite{bgs2013}}}%
\textit{Let \( (f_n)_{n \geq 1 } \), \( f_n \colon \mathbb{Z}_2^n \to \{ 0,1 \} \), be a sequence of Boolean functions such that 
\[
 \lim_{n \to 0} \sum_{i = 1}^n I_i(f_n)^2 = 0.
 \]
Then \( (f_n)_{n \geq 1 } \) is exclusion sensitive.}

We will prove the following weaker version of this result.

\begin{proposition}\label{proposition: exclusion sensitivity}
Let \( (f_n)_{n \geq 1 } \), \( f_n \colon \mathbb{Z}_2^n \to \{ 0,1 \} \), be a sequence of Boolean functions such that for some \( \delta > 0 \) and all large enough \( n \),
\[
 \sum_{i=1}^n I_i(f_n)^2  \leq n^{-\delta} \!\!\!\!\!.
 \]
Then \( (f_n)_{n \geq 1 } \) is exclusion sensitive.
\end{proposition}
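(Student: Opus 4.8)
The plan is to reduce exclusion sensitivity to the statement of Theorem~\ref{theorem: main result} applied to each Johnson graph $J_{n,m}$ separately, and then average over the slices. First I would observe that, by the coupling described just before the statement, $X^{(n)}$ restricted to the event $\{\|X_0^{(n)}\| = m\}$ is exactly a random walk on $J_{n,m}$ started from the uniform distribution on that slice, so that
\[
\Cov\bigl(f_n(X_0^{(n)}), f_n(X^{(n)}_{\varepsilon n})\bigr) \le \sum_{m=0}^n \pr(\|X_0^{(n)}\| = m) \cdot \E\bigl[f_n(X_0^{(n)}) f_n(X^{(n)}_{\varepsilon n}) \mid \|X_0^{(n)}\| = m\bigr] - \E[f_n(X_0^{(n)})]^2,
\]
which can be bounded by $\sum_m \pr(\|X_0^{(n)}\|=m)\, \Cov_m(f_n|_m(X_0), f_n|_m(X_{\varepsilon n})) + (\text{a cross term})$, where $f_n|_m$ is the restriction of $f_n$ to the $m$-th slice and $\Cov_m$ is computed with respect to the uniform measure on that slice. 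The cross term is the variance-of-conditional-means type quantity $\sum_m \pr_m \E_m[f_n]^2 - (\sum_m \pr_m \E_m[f_n])^2 \ge 0$; I would need to argue this term also tends to $0$, which I expect to follow because the slice-means concentrate — this is essentially the statement that a Boolean function with small total influence has nearly the same mean on nearby slices, and since $\|X_0^{(n)}\|$ concentrates on $n/2 \pm O(\sqrt n)$ this contributes a vanishing amount. (Alternatively one bypasses this by noting the cross term is itself bounded by a sum of slice-influences via a discrete derivative estimate.)

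Next, for the diagonal part I would apply the bound from Example~\ref{example: slices}: for each slice $m$ with $m \asymp n$ (which covers all but exponentially little mass under $\pi_n$), Theorem~\ref{theorem: main result} with $T_n = n$, $\Lambda^{(n)} = k/n$, and a small constant $r^{(n)} = r$ gives
\[
\Cov_m\bigl(f_n|_m(X_0), f_n|_m(X_{\varepsilon n})\bigr) \le C_{k,r} \left(\frac{\sum_{u\in U_n} I_u^m(f_n|_m)^2}{\binom{n}{2}}\right)^{1/(1+r)} + \Var_m(f_n|_m)\, e^{-\varepsilon k},
\]
where $I_u^m$ denotes influence of the transposition $u$ on the slice-$m$ restriction and $C_{k,r}$ is bounded in $n$ because $\log\frac{n(n-1)}{2m(n-m)}$ is bounded when $m \asymp n$. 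Taking $k$ large kills the $\Var$ term uniformly in $m$, so it remains to control $\sum_m \pr(\|X_0^{(n)}\|=m) \bigl(\sum_u I_u^m(f_n|_m)^2\bigr)^{1/(1+r)}$.

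The key remaining step — and the main obstacle — is to relate the transposition-influences $I_u^m(f_n|_m)$ on the slices to the coordinate-influences $I_i(f_n)$ of $f_n$ on the full cube $\mathbb Z_2^n$. The natural claim is that $\sum_{u \in U_n} I_u^m(f_n|_m)^2 \le C \bigl(\sum_{i=1}^n I_i(f_n)\bigr)^2$ or, better, $\le C \sum_{i,j} I_i(f_n) I_j(f_n)$, obtained by writing the influence of the transposition $(ij)$ on the slice restriction in terms of the event that swapping coordinates $i$ and $j$ changes $f_n$, and bounding the probability of that event by $\pr(f_n$ depends on coordinate $i) + \pr(f_n$ depends on coordinate $j)$ up to absolute constants coming from the ratio of the uniform measure on a slice to the product measure (this ratio is bounded for $m\asymp n$). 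This would give $\sum_u I_u^m(f_n|_m)^2 \lesssim \bigl(\sum_i I_i(f_n)\bigr)^2 \le n \sum_i I_i(f_n)^2 \le n \cdot n^{-\delta} = n^{1-\delta}$, which by~\eqref{equation: influences for Johnson graphs} (with $2\delta$ replaced by $\delta$, still positive) is $o(|U_n|^{1/2})$ and hence makes the diagonal contribution vanish after taking $r$ small enough that $\frac{1}{1+r} > \frac12 \cdot \frac{1}{1-\delta/2}$ or so. Assembling these pieces — cross term $\to 0$, variance terms killed by large $k$, diagonal terms killed by the influence hypothesis through the slice-to-cube comparison — and then letting $\varepsilon$ range over all positive reals (equivalently $k \to \infty$) completes the proof. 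The technical heart is genuinely the slice-versus-cube influence comparison together with the observation that all the constants depending on $\log\frac{n(n-1)}{2m(n-m)}$ stay bounded on the overwhelming-probability event $\{\|X_0^{(n)}\| \asymp n\}$, with the exponentially small remaining mass handled trivially by $\Var(f_n) \le 1$.
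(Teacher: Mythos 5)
Your plan follows the same architecture as the paper's proof: decompose the covariance over slices via the law of total covariance (the paper's Lemma~\ref{lemma: splitting}), kill the within-slice covariances by applying Theorem~\ref{theorem: main result} to the Johnson graphs \(J_{n,m}\) for the slices carrying almost all the mass (Lemma~\ref{lemma: common bound}), and handle the variance of the slice means separately. However, there is a genuine gap at exactly the step you call the technical heart. Your comparison \( I^{(m)}_{(ij)}(f_n)\le C\bigl(I_i(f_n)+I_j(f_n)\bigr) \) with an \emph{absolute} constant \(C\) is justified by the claim that ``the ratio of the uniform measure on a slice to the product measure is bounded for \(m\asymp n\)''; this is false. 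That ratio is \(2^n/\binom{n}{m}\), which is of order \(\sqrt{n}\) even at the central slice \(m=\lfloor n/2\rfloor\) and exponentially large once \(m/n\) is bounded away from \(1/2\); equivalently, conditioning only gives \(\pr(A\mid \|X_0^{(n)}\|=m)\le \pr(A)/\pr(\|X_0^{(n)}\|=m)\) with \(\pr(\|X_0^{(n)}\|=m)\asymp n^{-1/2}\) at best. Carrying this unavoidable \(\sqrt{n}\) through your chain yields \(\sum_{(ij)}\bigl(I^{(m)}_{(ij)}(f_n)\bigr)^2\lesssim n\cdot n\sum_i I_i(f_n)^2\le n^{2-\delta}\) instead of \(n^{1-\delta}\), which is no longer \(o(|U_n|^{1/2})=o(n)\), so the criterion~\eqref{equation: influences for Johnson graphs} cannot be invoked for any \(\delta<1\) and the diagonal part of your argument collapses. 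The paper avoids any pointwise slice-versus-cube comparison: in Lemma~\ref{lemma: good indices} it first bounds the \emph{cube} transposition influences, \(\sum_{(ij)}I_{(ij)}(f_n)^2\le 4n\sum_i I_i(f_n)^2\) (both sides under \(\pi_n\)), then uses \(I_{(ij)}(f_n)=\sum_k \pr(\|X_0^{(n)}\|=k)\,I^{(k)}_{(ij)}(f_n)\) and two applications of Markov's inequality in the random slice \(\|X_0^{(n)}\|\) to get \(\sum_{(ij)}\bigl(I^{(m)}_{(ij)}(f_n)\bigr)^2< n\bigl(\sum_i I_i(f_n)^2\bigr)^{1-2\alpha}\) for all but a vanishing \(\pi_n\)-fraction of slices \(m\). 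Such an averaging statement, valid for \emph{most} slices rather than uniformly in \(m\), is what the rest of the proof actually needs, and some argument of this type must replace your constant-factor comparison.

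A secondary incompleteness: the cross term \(\Var\bigl(\E[f_n(X_0^{(n)})\mid \|X_0^{(n)}\|]\bigr)\) is only asserted to vanish because ``slice-means concentrate.'' This is not automatic and is where the paper spends Lemma~\ref{lemma: variance}: assuming the variance stays bounded away from zero, it finds two levels within distance \(O(\sqrt{n})\) of \(n/2\) whose conditional means differ by \(\varepsilon\), counts the edges between consecutive levels on which \(f_n\) must change, and via Stirling's formula derives a lower bound on \(\sum_i I_i(f_n)^2\) contradicting the hypothesis. Your parenthetical ``discrete derivative estimate'' points in the right direction, but as written this step is missing, not merely routine.
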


The main motivation for trying to use Theorem~\ref{theorem: main result} to give a proof of something like  Proposition~\ref{proposition: exclusion sensitivity} is an exclusion process is very similar to a random walk on a Johnson graph \( J_{n, m_n} \) if we pick \( m_n \) at random. Also, one might guess that there should be some relation between the sum \( \sum_{i = 1}^n I_i(f)^2 \) and sums \( \sum_{(ij)} I_{(ij)}(f)^2 \). Roughly, the proof of Proposition~\ref{proposition: exclusion sensitivity}  that will be provided in this section simply formalize these ideas.

As we in the proof of this result will deal with several graphs simultaneously, we need to refine our notation from the previous sections. To this end, let \( \{ \psi_i^{(n)} \}_i\) be an orthonormal set of eigenvectors to the generator \(Q^{(n)} \) of  the random walk on \( \Omega_{2,n} \) with corresponding eigenvectors \( \{ \lambda_i^{(n)} \}_i \). Further, let \( \{ \psi_i^{(n,m)} \}_i \)  and \( \{ \lambda_i^{(n,m)} \}_i \)  be the corresponding sets for \( J_{n,m} \). Let \( \pi_n \) be the stationary distribution for the random walk on \( \Omega_{2,n} \) and let \( \pi_{n,m} \) be the stationary distribution for the random walk on \( J_{n,m} \).  Whenever we write \( X^{(n)} \), we will mean the random walk on \( J_{n,\|X_0^{(n)}\|} \), and whenever we have an expectation or probability containing the term \( X_0^{(n)} \), \( X_0^{(n)} \sim \pi_n \). Whenever we write \( I_i(f_n) \) for some \( i \in \{ 1,2, \ldots, n \} \) we mean \( P(f(X_0^{(n)}) \not = f((X_0^{(n)})_{e_i})) \), where \( X_0^{(n)} \sim \pi_n \). Similarly, for \( (ij) \in S_n \) we will write \( I_{(ij)}(f_n) \) to denote \( P(f(X_0^{(n)} ) \not = f((X_0^{(n)} )_{(ij)})) \) where \( X_0^{(n)} \sim \pi_n \). However, in addition to these notations we will write \( I_{(ij)}^{(m)}(f_n) \) to denote \( P(f(X_0^{(n)} ) \not = f((X_0^{(n)} )_{(ij)}) \mid \| X_0^{(n)}  \| = m) \).

Our first lemma relates the property of being exclusion sensitive with the property of being noise sensitive on slices \( J_{n,m} \).
\begin{lemma}\label{lemma: splitting}
For any \( n \geq 1 \), \( f\colon \mathbb{Z}_2^n \to \{ 0,1 \} \), and \( t \geq 0 \), 
\begin{equation}
\begin{split}
%\Cov(f(X_0^{(n)} ),f(X_{t}^{(n)})) 
%\\=
%\sum_{m=0}^n P(\| X_0^{(n)} \| = m) \cdot \Bigl\{ \E \left[ f(X_0^{(n)})f(X_t^{(n)}) \mid \| X_0^{(n)} \| = m \right] - \E[f(X_0^{(n)}) \mid \| X_0^{(n)} \| = m]^2 \Bigr\} 
%\\+
%\Var \left(  \E \left[ f(w) \mid \| w \| = \| X_0^{(n)} \| \right] \right).
&\Cov (f_n(X_0^{(n)}), f_n(X^{(n)}_{t_{rel}^{(n)}}))
\\&\hspace{2em}=
\sum_{m=0}^n P(\| X_0^{(n)} \| = m) \cdot \Biggl\{ \E \left[ f(X_0^{(n)})f(X^{(n)}_{t_{rel}^n}) \mid \| X^{(n)}_0 \| = m \right] 
\\& \hspace{18em}- \E \left[f(X_0^{(n)}) \mid \| X_0^{(n)} \| = m\right]^2 \Biggr\}
\\&\hspace{4em}+
\Var \left(  \E \left[ f(X_0^{(n)}) \mid   \| X_0^{(n)} \| \right] \right).
\end{split}
\label{IIIequation: splitting variance}
\end{equation}
\end{lemma}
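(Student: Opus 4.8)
The plan is to obtain~\eqref{IIIequation: splitting variance} from the law of total covariance, conditioning on the weight $W \coloneqq \| X_0^{(n)} \|$. First I would fix $t \geq 0$ (in the intended application $t = t_{rel}^{(n)}$) and set $Y \coloneqq f(X_0^{(n)})$, $Z \coloneqq f(X_t^{(n)})$, and then invoke the elementary identity
\[
\Cov(Y,Z) = \E\bigl[\Cov(Y,Z \mid W)\bigr] + \Cov\bigl(\E[Y\mid W],\, \E[Z\mid W]\bigr),
\]
which holds for any square-integrable random variables on a common probability space.

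The one structural input I need is the following. Every move of the random walk $X^{(n)}$ is obtained by applying a transposition of coordinates to a word in $\{0,1\}^n$, and such a transposition does not change the number of ones; hence $\| X_s^{(n)} \| = \| X_0^{(n)} \|$ for all $s \geq 0$ almost surely. Since $X_0^{(n)} \sim \pi_n$ is uniform on $\mathbb{Z}_2^n$, the conditional law of $X_0^{(n)}$ given $\{\| X_0^{(n)} \| = m\}$ is the uniform measure on $\{ w \in \{0,1\}^n \colon \|w\| = m\}$, which is precisely the stationary distribution $\pi_{n,m}$ of the random walk on $J_{n,m}$. Therefore, conditioned on $\{\| X_0^{(n)} \| = m\}$, the process $X^{(n)}$ is a stationary random walk on $J_{n,m}$; in particular $X_t^{(n)}$ is again distributed according to $\pi_{n,m}$, so that
\[
\E\bigl[ f(X_t^{(n)}) \mid \| X_0^{(n)} \| = m \bigr] = \E\bigl[ f(X_0^{(n)}) \mid \| X_0^{(n)} \| = m \bigr]
\]
for every $m$ with $P(\| X_0^{(n)} \| = m) > 0$; equivalently, $\E[Z \mid W] = \E[Y \mid W]$ almost surely.

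With this in hand, substituting $\E[Z\mid W] = \E[Y\mid W]$ into the decomposition makes the second term equal to $\Cov(\E[Y\mid W], \E[Y\mid W]) = \Var(\E[Y\mid W]) = \Var\bigl(\E[f(X_0^{(n)}) \mid \| X_0^{(n)} \|]\bigr)$, which is the last summand of~\eqref{IIIequation: splitting variance}. For the first term I would expand the outer expectation over $W$ as a sum over $m \in \{0,1,\dots,n\}$ and write $\Cov(Y,Z \mid W = m) = \E[YZ \mid W = m] - \E[Y \mid W = m]\,\E[Z \mid W = m]$; using $\E[Z\mid W=m] = \E[Y\mid W=m]$ once more to rewrite the last product as $\E[Y \mid W = m]^2$, this term becomes exactly
\[
\sum_{m=0}^{n} P(\| X_0^{(n)} \| = m)\,\Bigl( \E\bigl[f(X_0^{(n)})f(X_t^{(n)}) \mid \| X_0^{(n)} \| = m\bigr] - \E\bigl[f(X_0^{(n)}) \mid \| X_0^{(n)} \| = m\bigr]^2 \Bigr),
\]
which is the remaining part of the right-hand side of~\eqref{IIIequation: splitting variance}. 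Adding the two contributions gives the lemma.

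The argument is essentially bookkeeping with conditional expectations, so the hard part — really the only point needing care — will be justifying that conditioning on $\| X_0^{(n)} \| = m$ turns $X^{(n)}$ into a \emph{stationary} random walk on the slice $J_{n,m}$. This is exactly where the specific structure of the problem enters: it relies on the weight-preservation of transpositions together with the fact that $X_0^{(n)}$ is drawn from the (slice-wise uniform) measure $\pi_n$, so that the conditioned initial distribution is the stationary measure of the slice rather than some arbitrary distribution.
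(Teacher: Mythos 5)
Your proposal is correct and follows essentially the same route as the paper: an application of the law of total covariance with \( X = f(X_0^{(n)}) \), \( Y = f(X_t^{(n)}) \) and \( Z = \| X_0^{(n)} \| \). The only difference is that you also spell out the stationarity of the conditioned walk on the slice \( J_{n,m} \) (so that \( \E[f(X_t^{(n)}) \mid \| X_0^{(n)} \| = m] = \E[f(X_0^{(n)}) \mid \| X_0^{(n)} \| = m] \)), a step the paper leaves implicit but which is indeed needed to match the stated form of the right-hand side.
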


\begin{proof}
The desired result follows directly from the well known result stating that for three random variables \( X \), \( Y \) and \( Z \),
\[
\Cov (X,Y) = \E \left[\Cov(X,Y \mid Z) \right]+ \Cov \left( \E [X \mid Z], \E [Y \mid Z]\right)
\]
by setting \( X = f(X_0^{(n)}) \), \( Y = f(X_t^{(n)})  \) and \( Z = \| X_0^{(n)} \|  \).
\end{proof}

Note that the previous lemma shows that being exclusion sensitive exactly corresponds to being noise sensitive on almost all slices of the Hamming cube  and asymptotically having the same mean on almost all such slices.

We will  deal with the first and second term on the right hand side of~\eqref{IIIequation: splitting variance} separately. We begin with the first term, which is where we will use Theorem~\ref{theorem: main result}. To this end, we will first give a proof of the following lemma, which relates the sum \( \sum_{i=1}^n I_i(f_n)^2 \) with the sums \( \sum_{(ij) \in S_n} I_{(ij)}^{(m)}(f_n)^2 \) for \( m \in \{0, 1, \ldots, n \} \).

\begin{lemma}\label{lemma: good indices}
For any  \( {\alpha \in (0,1/2)} \),  
\begin{equation*}
P \left( \sum_{(ij) \in S_n}  I_{(ij)}^{(\|X_0^{(n)}\|)}(f_n)^2<  n \cdot \left( \sum_{i=1}^n I_i(f_n)\right)^{1-2\alpha} \right) \geq 1 - 4 \left( \sum_{i=1}^n I_i(f_n)\right)^\alpha  \!\!\!\!.
\end{equation*}
Consequently, if \( \sum_{i=1}^n I_i(f_n)^2 < n^{-\delta} \), then for any \( \alpha \in (0,1/2)\),
\begin{equation*}
P \left( \sum_{(ij) \in S_n}  I_{(ij)}^{(\|X_0^{(n)}\|)}(f_n)^2<  n^{1 - \delta(1-2\alpha)} \right) \geq 1 - 4 n^{-\alpha \delta}  \!\!\!\!\!\!\!\!.
\end{equation*}
\end{lemma}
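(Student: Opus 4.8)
The plan is to express the transposition influences $I_{(ij)}^{(m)}(f_n)$ in terms of the coordinate influences $I_i(f_n)$ and then use a first-moment / Markov-inequality argument over the random slice $m=\|X_0^{(n)}\|$. First I would observe the elementary identity relating the two kinds of influences. For a transposition $(ij)$, flipping the pair of coordinates $i,j$ changes $f$ precisely when $w_i\neq w_j$ and $f$ is sensitive to swapping these entries; on a fixed slice $J_{n,m}$ one gets a bound of the shape $I_{(ij)}^{(m)}(f_n)\le C\bigl(I_i(f_n)+I_j(f_n)\bigr)$ (possibly up to a factor depending on $m$ through the conditional measure, which I would control using $\pi_n$-versus-$\pi_{n,m}$ comparisons and the fact that the slice weights are binomial). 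Summing over all $\binom n2$ transpositions gives, up to constants, $\sum_{(ij)}I_{(ij)}^{(m)}(f_n)^2 \lesssim n\sum_i I_i(f_n)^2$ \emph{in expectation over }$m$, i.e. $\E\bigl[\sum_{(ij)}I_{(ij)}^{(\|X_0^{(n)}\|)}(f_n)^2\bigr]\le c\, n\sum_i I_i(f_n)^2$ for some absolute $c$.

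Next I would convert this expectation bound into the claimed high-probability statement via Markov's inequality. Write $W:=\sum_{(ij)\in S_n} I_{(ij)}^{(\|X_0^{(n)}\|)}(f_n)^2$ and $\sigma:=\sum_{i=1}^n I_i(f_n)$ (note $\sum_i I_i(f_n)^2\le\sigma$ since each $I_i\le1$). From $\E[W]\le c\, n\,\sigma$ and Markov,
\[
P\!\left(W\ge n\,\sigma^{1-2\alpha}\right)\le \frac{c\,n\,\sigma}{n\,\sigma^{1-2\alpha}} = c\,\sigma^{2\alpha}\le 4\,\sigma^{\alpha},
\]
where the last step uses that $\sigma\le$ (a constant) for $n$ large — indeed under the hypothesis $\sum_i I_i(f_n)^2\le n^{-\delta}$ one has $\sigma=\sum_i I_i(f_n)\le \sqrt{n\sum_i I_i(f_n)^2}\le n^{(1-\delta)/2}$, which is not bounded, so I would instead keep the bound in the form $P(W\ge n\sigma^{1-2\alpha})\le c\sigma^{2\alpha}$ and then absorb the constant $c$ into the exponent by slightly adjusting $\alpha$, or simply carry $c$ and note $c\sigma^{2\alpha}\le 4\sigma^\alpha$ once $\sigma$ is bounded above by $(4/c)^{1/\alpha}$; the cleaner route is to prove the sharp bound $\E[W]\le n\sigma$ directly so no constant appears. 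The second, "consequently" statement then follows by plugging $\sigma\le n^{1-\delta}$ hmm — more carefully, by plugging the crude bound $\sum_i I_i(f_n)^2\le n^{-\delta}$ into $\sum_i I_i(f_n)\le n^{(1-\delta)/2}$... this does not immediately give $n^{1-\delta(1-2\alpha)}$, so in fact I would run the first-moment argument with $\sum_i I_i(f_n)^2$ rather than $\sum_i I_i(f_n)$ in the threshold, giving $P(W\ge n(\sum_i I_i(f_n)^2)^{1-2\alpha})\le (\sum_i I_i(f_n)^2)^\alpha$, and then substitute $n^{-\delta}$ directly to obtain $P(W\ge n^{1-\delta(1-2\alpha)})\ge 1-4n^{-\alpha\delta}$. (I would reconcile this with the exact statement as written; the two forms are equivalent up to the bookkeeping above.)

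The main obstacle I anticipate is the first step: getting the clean inequality $I_{(ij)}^{(m)}(f_n)\le I_i(f_n)+I_j(f_n)$ (or the summed version $\sum_{(ij)}I_{(ij)}^{(m)}(f_n)\le (n-1)\sum_i I_i(f_n)$, or its $\ell^2$ analogue) \emph{uniformly in the slice} $m$. The subtlety is that $I_i(f_n)$ is defined with respect to the full-cube measure $\pi_n$ while $I_{(ij)}^{(m)}(f_n)$ is a conditional quantity on the slice $\{\|w\|=m\}$; one must pass between $P(f(w)\ne f(w^{i\leftrightarrow j})\mid\|w\|=m)$ and $P(f(w)\ne f(w^{\oplus e_i}))$. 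A transposition $(ij)$ acts nontrivially only on configurations with $w_i\ne w_j$, and on such configurations swapping coordinates equals flipping $i$ then flipping $j$; a union bound gives the pairwise estimate, and averaging the slice-conditional estimate over $m\sim\bin(n,1/2)$ recovers the unconditional influences without any lossy constant, since $\pi_n = \sum_m P(\|X_0\|=m)\,\pi_{n,m}$ exactly. Once that averaging identity is set up correctly, the rest is the routine Markov-inequality bookkeeping described above.
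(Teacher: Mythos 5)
Your plan has a genuine gap at its central step: the claimed first-moment bound \( \E\bigl[\sum_{(ij)} I_{(ij)}^{(\|X_0^{(n)}\|)}(f_n)^2\bigr] \leq c\, n \sum_i I_i(f_n)^2 \) is false in general, and so is the uniform-in-\(m\) comparison \( I_{(ij)}^{(m)}(f_n) \leq C\,(I_i(f_n)+I_j(f_n)) \) that you hope to get from a \( \pi_n \)-versus-\( \pi_{n,m} \) comparison. The averaging identity you invoke gives \( I_{(ij)}(f_n) = \sum_m p_m I_{(ij)}^{(m)}(f_n) \) with \( p_m = P(\|X_0^{(n)}\|=m) \), but when you square, Jensen goes the wrong way: \( \sum_m p_m I_{(ij)}^{(m)}(f_n)^2 \geq I_{(ij)}(f_n)^2 \), so the expectation of the slice-wise \emph{squared} influences is bounded \emph{below}, not above, by the unconditional quantity \( \sum_{(ij)} I_{(ij)}(f_n)^2 \leq 4n\sum_i I_i(f_n)^2 \). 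A concrete counterexample is \( f = \mathbf{1}_{\{w = e_1\}} \): there \( I_i(f) = 2\cdot 2^{-n} \) for every \( i \), so \( n\sum_i I_i(f)^2 = 4n^2 4^{-n} \), while on the slice \( m=1 \) (which has probability \( n2^{-n} \)) one has \( I_{(1j)}^{(1)}(f) = 2/n \), giving \( \E\bigl[\sum_{(ij)} I_{(ij)}^{(\|X_0^{(n)}\|)}(f)^2\bigr] \geq n2^{-n}\cdot (n-1)\cdot 4/n^2 \approx 4\cdot 2^{-n} \), which exceeds \( c\,n\sum_i I_i(f)^2 \) by a factor of order \( 2^n/n^2 \). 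The same example shows the pointwise slice bound fails (the measure-comparison factor \( 2^n/\binom{n}{m} \) is unbounded on rare slices), so Markov applied directly to \( W \) cannot give the lemma; the lemma survives only because the offending slices carry small probability, and that is exactly the information a bound on \( \E[W] \) throws away.

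The paper's proof is built to avoid this. It works only with quantities that are \emph{linear} in the conditional influences: it first proves \( \sum_{(ij)} I_{(ij)}(f_n)^2 \leq 4n\sum_i I_i(f_n)^2 \) for the unconditional influences (your union bound is correct at this level), then expands one factor as \( I_{(ij)}(f_n) = \sum_k p_k I_{(ij)}^{(k)}(f_n) \) to rewrite the left side as \( \sum_k p_k \sum_{(ij)} I_{(ij)}^{(k)}(f_n)\bigl(\sum_\ell p_\ell I_{(ij)}^{(\ell)}(f_n)\bigr) \), and applies Markov over the random slice twice: once to conclude that, except on a set of slices of probability at most \( 1/a \), the bilinear sum \( \sum_{(ij)} I_{(ij)}^{(k)}(f_n)\, I_{(ij)}(f_n) \) is at most \( 4an\sum_i I_i(f_n)^2 \), and once more to pass from this bilinear sum to \( \sum_{(ij)} I_{(ij)}^{(k)}(f_n)^2 \) at the cost of a factor \( b \) outside a set of probability \( 1/b \). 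Choosing \( a = b = \bigl(\sum_i I_i(f_n)^2\bigr)^{-\alpha}/2 \) yields the stated probability and threshold. So the routine part of your bookkeeping is fine, but the key step must be replaced by this bilinear decomposition plus a two-fold Markov argument over the slice; there is no constant-free (or constant-bearing) bound on \( \E[W] \) to start from.
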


\begin{proof}
Note first that
\begin{equation*}
\begin{split}
\sum_{(ij)\in S_n} I_{(ij)}(f_n)^2
&\leq
\sum_{(ij)\in S_n} \left( I_{i}(f_n) + I_j(f_n)\right) ^2
\\&\leq
2\sum_{(ij)\in S_n}   I_{i}(f_n)^2 + I_j(f_n)^2
\\&\leq
4n \sum_{i=1}^n   I_{i}(f_n)^2.
\end{split}
\end{equation*}
Moreover, if we let \( p_m^{(n)} \coloneqq P(\|X_0^{(n)}\| = m) \), then
\begin{equation*}
\begin{split}
\sum_{(ij)\in S_n} I_{(ij)}(f_n)^2
&=
\sum_{(ij)\in S_n} \left( \sum_{k = 0}^n p_k^{(n)} I_{(ij)}^{(k)}(f_n)\right) ^2
\\&=
\sum_{k = 0}^n p_k^{(n)} \sum_{(ij)\in S_n}  I_{(ij)}^{(k)}(f_n)  \left( \sum_{\ell = 0}^n p_\ell^{(n)} I_{(ij)}^{(\ell)}(f_n)\right).
\end{split}
\end{equation*}
Consequently
\begin{equation*}
\sum_{k = 0}^n p_k^{(n)} \sum_{(ij)\in S_n}  I_{(ij)}^{(k)}(f_n)  \left( \sum_{\ell = 0}^n p_\ell^{(n)} I_{(ij)}^{(\ell)}(f_n)\right)
\leq
4n \sum_{i=1}^n   I_{i}(f_n)^2 %< 4n^{1 - \delta}.
\end{equation*}
Let \( A_n(a) \) be the set of all  \( k \in \{0,1, \ldots, n\}  \) such that 
\begin{equation*}
\sum_{(ij)\in S_n}  I_{(ij)}^{(k)}(f_n)  \left( \sum_{\ell=0}^n p_\ell^{(n)} I_{(ij)}^{(\ell)}(f_n)\right) < 4an \sum_{i=1}^n   I_{i}(f_n)^2\!.  % 4a n^{1 - \delta}.
\end{equation*}
By picking \( a \) large, we can make the probability \( P( \| X_0^{(n)} \| \in A_n(a)) \) large. In particular, by Markov's inequality, 
\[
P( \| X_0^{(n)} \| \in A_n(a))  \geq 1 - \frac{1}{a} .
\]
Similarly, let \( B_n(b) \) be the set of all  \( k\in \{ 0,1, \ldots, n \} \) such that
\begin{equation*}
\sum_{(ij)\in S_n}  I_{(ij)}^{(k)}(f_n)^2
<
b \cdot \sum_{(ij)\in S_n}  I_{(ij)}^{(k)}(f_n)  \left( \sum_{\ell = 0}^np_\ell^{(n)} I_{(ij)}^{(\ell)}(f_n)\right) .
\end{equation*}
Then again by Markov's inequality, 
\[
P( \| X_0^{(n)} \| \in B_n(b))  \geq 1 - \frac{1}{b}.
\]
Summing up, we have showed that for any \( a,b > 0 \), if we pick an level \( m \) at random according to the law of \( \| X_0^{(n)} \| \), the probability is at least \( 1 - \frac{1}{a} - \frac{1}{b} \) that
\begin{equation*}
\sum_{(ij)\in S_n}  I_{(ij)}^{(k)}(f_n)^2< 4abn   \sum_{i=1}^n   I_{i}(f_n)^2  % 4ab \cdot  n^{1 - \delta}.
\end{equation*}
Consequently, if we set \(  a = b =  \left( \sum_{i=1}^n I_i(f_n)^2  \right)^{-\alpha }/2\) for some \( \alpha \in (0,0.5) \), then the probability is at least \( 1 -    4 \left( \sum_{i=1}^n I_i(f_n)\right)^\alpha \) that
\begin{equation*}
\sum_{(ij)\in S_n}  I_{(ij)}^{(k)}(f_n)^2<  n \cdot \left( \sum_{i=1}^n   I_{i}(f_n)^2  \right)^{1 - 2\alpha}\!\!\!\!\!\!\!\!\!\!\!\!.
\end{equation*}

\end{proof}
Note that the previous lemma shows that if there is \( \delta \in (0,1) \) such that \( \sum_i I_i(f_n)^2 < n^{-\delta} \), then there is \( \delta' \in (0,1) \) such that asymptotically, 
\begin{equation*}
\sum_{(ij)\in S_n}  I_{(ij)}^{(k)}(f_n)^2<  n^{1-\delta'}
\end{equation*}
for almost all \( k \in \{ 0,1, \ldots, n \} \). In particular, we get the same \( \delta' \) for all such \( k \). This sets us up in a good position to use  Theorem~\ref{theorem: main result}.

\begin{lemma}\label{lemma: common bound}
Let  \( f_n \colon \mathbb{Z}_2^n \to \{ 0 ,1 \} \)  and let \( m \in \{ 0,1, \ldots n \} \) satisfy 
\begin{equation}
 2m(n-m) \geq \varepsilon n(n-1) .
\label{equation: mn condition}
\end{equation}
Further let
\[
 \hat f_{n,m}(i) \coloneqq \langle f_n, \psi_i^{(n,m)} \rangle = \E[f_n(X_0^{(n)}) \psi_i^{(n,m)}(X_0^{(n)}) \mid \| X_0^{(n)} \| = m ] .
\]
 If there is \( \delta > 0  \) such that
\[
\sum_{(ij)\in S_n} I_{(ij)}^{(m)} (f_n)^2 \leq |U_n|^{1/2-\delta}
\]
then
\begin{equation}
\sum_{i \geq 1\colon  \lambda_i^{(n)} < C\lambda_1^{(n)}} \hat f_{n,m}(i)^2 
\leq 
\frac{1}{2}\cdot \exp \left( C \cdot \log \varepsilon \cdot   \log \delta +  \log \frac{2n}{n-1}  \right)  \cdot  n^{
   1-  \frac{ 1+2\delta}{1 + \delta} 
}.
\end{equation}
Consequently, for any \( m \in \mathbb{N} \) such that \( 2m(n-m) \geq \varepsilon n(n-1) \) and any \( C > 0 \), there is a constant \( C' = C'(C, \delta, \varepsilon) \) such that 
\begin{equation*}
\begin{split}
\Cov_{ X_0^{(n)} \sim \pi_{n,m}}( f_n(X_0^{(n)}),f_n(X_{\alpha /\lambda_1^{(n,m)}}^{(n)}) )
&=
\sum_{i=1}^{\mathclap{|S^{(n)}|-1}} e^{{-\alpha \lambda_j^{(n,m)}/\lambda_1^{(n,m)}} } \hat f_{n,m}(i)^2 
\\ &\leq
\sum_{i\geq 1 \colon  \lambda_i^{(n,m)}<C\lambda_1^{(n,m)}}  \!\!\!\!\!\!\!\! \hat f_{n,m}(i)^2 
+
 e^{-\alpha C } 
\\ &\leq C' \cdot n^{1-\frac{1+2\delta}{1+\delta}} + e^{-\alpha C}.
\end{split}
\end{equation*}
\end{lemma}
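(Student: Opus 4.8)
The plan is to apply Theorem~\ref{theorem: main result} to the random walk on a single Johnson slice $J_{n,m}$, with the Boolean function obtained by restricting $f_n$ to $\{ w \in \{0,1\}^n : \|w\| = m \}$. First I would record from Example~\ref{example: slices} the three parameters of $J_{n,m}$ that enter~\eqref{equation: main result}: $|U_n| = \binom{n}{2}$, spectral gap $\lambda_1^{(n,m)} = 2/(n-1)$, and log-Sobolev constant $\rho^{(n,m)} = \Theta\bigl( 1/(n\log\tfrac{n(n-1)}{2m(n-m)}) \bigr)$. The single role of hypothesis~\eqref{equation: mn condition} is that $2m(n-m) \geq \varepsilon n(n-1)$ forces $\log\tfrac{n(n-1)}{2m(n-m)} \leq \log(1/\varepsilon)$, hence a lower bound $\rho^{(n,m)} \geq c/(n\log(1/\varepsilon))$ with $c > 0$ absolute and \emph{uniform over all admissible $m$}. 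I would also note that for a transposition $u = (ij)$ the influence of the restricted function on $J_{n,m}$ is precisely $I_{(ij)}^{(m)}(f_n)$, so the hypothesis of the lemma says exactly $\sum_{u \in U_n} I_u^2 \leq |U_n|^{1/2-\delta}$ for this graph.

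The inequality I would actually invoke is the one established just before the last line of the proof of Theorem~\ref{theorem: main result} (that last line then specialises it via~\eqref{equation: covariance}): for all $\Lambda > 0$ and all $r \in (0,1)$,
\[
\sum_{i \geq 1 :\, \lambda_i < \Lambda} \hat f(i)^2 \;\leq\; \frac{e^{-\Lambda\log(r)/\rho}}{2\lambda_1}\left( \frac{\sum_{u \in U} I_u(f)^2}{|U|} \right)^{1/(1+r)}.
\]
Applying this on $J_{n,m}$ with $\Lambda = C\lambda_1^{(n,m)}$ and $r = \delta$ (which we may assume lies in $(0,1)$), I would substitute $1/(2\lambda_1^{(n,m)}) = (n-1)/4$, the bound $\bigl(|U_n|^{1/2-\delta}/|U_n|\bigr)^{1/(1+\delta)} = |U_n|^{-(1/2+\delta)/(1+\delta)}$, and $|U_n| = n(n-1)/2$; a short computation reduces the algebraic part to $n^{1-\frac{1+2\delta}{1+\delta}}$ times a factor bounded by $\tfrac{2n}{n-1}$, while the uniform log-Sobolev bound together with $\Lambda = 2C/(n-1)$ gives $e^{-\Lambda\log(r)/\rho^{(n,m)}} \leq \exp\bigl( \tfrac{2C}{c}\cdot\tfrac{n}{n-1}\cdot\log\varepsilon\,\log\delta \bigr)$. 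Collecting these and absorbing $c$ into $C$ yields the first displayed inequality. One legitimacy point to spell out: Theorem~\ref{theorem: sobolev} only requires $p \geq 1 + \exp(-2\rho t)$, so replacing $\rho^{(n,m)}$ by a smaller lower bound only makes that condition harder; taking $p = 1 + \exp(-2\rho_0 t)$ still satisfies it, so the proof of Theorem~\ref{theorem: main result} runs verbatim with any lower bound on $\rho^{(n,m)}$ in place of the true constant — which is exactly what makes everything uniform in $m$.

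For the ``Consequently'' part I would expand the slice covariance using~\eqref{equation: covariance} (equivalently~\eqref{IIIequation: spectral formulation of noise sensitivity}) for the walk $X^{(n)}$ on $J_{n,m}$ with $X_0^{(n)} \sim \pi_{n,m}$ and time $\alpha/\lambda_1^{(n,m)}$, obtaining $\sum_{i \geq 1} e^{-\alpha\lambda_i^{(n,m)}/\lambda_1^{(n,m)}}\hat f_{n,m}(i)^2$, then split the sum at $\lambda_i^{(n,m)} = C\lambda_1^{(n,m)}$. On $\{\lambda_i^{(n,m)} < C\lambda_1^{(n,m)}\}$ the exponential factor is $\leq 1$, so that part is bounded by the first displayed inequality; on $\{\lambda_i^{(n,m)} \geq C\lambda_1^{(n,m)}\}$ it is $\leq e^{-\alpha C}$, and since $f_n$ is Boolean, $\sum_{i \geq 1}\hat f_{n,m}(i)^2 = \Var_{\pi_{n,m}}(f_n) \leq 1$, so that part is $\leq e^{-\alpha C}$. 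Choosing $C' := 2\exp\bigl( \tfrac{2C}{c}\log\varepsilon\,\log\delta \bigr)$, which depends on $C, \delta, \varepsilon$ only and not on $m$ or $n$, then absorbs the $\tfrac12\cdot\tfrac{2n}{n-1}$-type prefactor and gives $\leq C' n^{1-\frac{1+2\delta}{1+\delta}} + e^{-\alpha C}$, as claimed.

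The only genuine difficulty I foresee is making the estimate \emph{uniform in} $m$: the log-Sobolev constant of a thin slice ($m/n$ near $0$ or $1$) is far worse than that of a balanced one, so without~\eqref{equation: mn condition} the constant $C'$ would degenerate. Condition~\eqref{equation: mn condition} precisely provides one log-Sobolev lower bound good for all admissible $m$ simultaneously, and Theorems~\ref{theorem: main result} and~\ref{theorem: sobolev} remain valid with $\rho$ replaced by such a lower bound. Everything else is elementary, and matching the precise constants in the displayed inequality is just a matter of carrying the $\Theta$-constant from the log-Sobolev estimate of~\cite{ly1998} through the computation.
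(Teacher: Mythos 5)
Your proposal is correct and follows essentially the same route as the paper: apply the intermediate inequality from the proof of Theorem~\ref{theorem: main result} on the slice $J_{n,m}$ with $\Lambda = C\lambda_1^{(n,m)}$ and $r=\delta$, use~\eqref{equation: mn condition} to make the log-Sobolev bound (and hence the constant) uniform in $m$, and then obtain the ``Consequently'' part by splitting the spectral expansion of the covariance at $C\lambda_1^{(n,m)}$, exactly as the paper does by imitating Lemma~\ref{lemma: general finite sums}. Your explicit remark that hypercontractivity, and hence Theorem~\ref{theorem: main result}, remains valid with $\rho$ replaced by any lower bound is a worthwhile clarification that the paper leaves implicit.
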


\begin{remark}
Note that as \( \|X_0^{(n)} \| \sim \bin(n,0.5) \), 
\[ P(\|X_0^{(n)}\|(n-\|X_0^{(n)}\|) \geq \varepsilon  n^2 /2) \to 1. \]
This implies that if we consider a random walk on \( \bigcup_{m = 0}^n J_{n,m} \) and choose \( X_0^{(n)} \sim \pi_n \), then the lemma above gives us a convergence which is more or less uniform in \( m \).
\end{remark}

\begin{proof}
Note first that for any sequence \( ( \Lambda^{(n)})_{n \geq 1 } \) for positive real numbers, Theorem~\ref{theorem: main result} ensures that
\begin{equation*}
\sum_{i\geq 1 \colon  \lambda_i^{(n,m)} < \Lambda^{(n)}}  \hat f_{n,m}(i)^2 
\leq 
\inf_{r^{(n)}\in (0,1)} \frac{e^{-\Lambda^{(n)}/\rho^{(n,m)} \cdot   \log (r^{(n)} ) }}{2\lambda_1^{(n,m)}   } \cdot   \left(\frac{\sum_{(ij)} I_{(ij)}^{(m)}(f_n)^2 }{\binom{n}{2}}\right)^{1/(1+r^{(n)}) }
\end{equation*}
where \( \rho^{(n,m}) \) is the log-Sobolev constant for the random wak on \( J_{n,m} \). Recall that for \( J_{n,m} \), \( \lambda_1^{(n,m)} = 1/n \) and  \( \rho^{(n,m)} = 1/n\log \frac{n(n-1)}{2m(n-m)} \). Also, by assumption,
\[
 \sum_{(ij)} I_{(ij)}^{(m)} (f_n)^2 \leq {\binom{n}{2}}^{1/2-\delta}\!\!\!\!\!\!\!\!\!\!\!\!.
\]
 This implies that for any \( C > 0 \),
\begin{equation*}
\sum_{i\geq 1 \colon  \lambda_i^{(n)} < C \lambda_1^{(n)}}  \hat f_{n,m}(i)^2 
\leq 
\inf_{r^{(n)} \in (0,1)} \frac{e^{-C \cdot \log \frac{n(n-1)}{2m(n-m)} \cdot   \log (r^{(n)} ) }}{2/n   } \cdot   \left(\frac{ {\binom{n}{2}}^{1/2-\delta} }{{\binom{n}{2}}}\right)^{1/(1+r^{(n)}) }\hspace{-3.8em}.
\end{equation*}
Using the bound given by~\eqref{equation: mn condition} and simplifying, get the following upper bound on the right hand side of the previous equation.
\begin{equation*}
\frac{1}{2} \cdot \inf_{r^{(n)}\in (0,1)} \exp \left( 
C  \log \varepsilon \cdot   \log r^{(n)} + \frac{(1/2+\delta)\log  \frac{2n}{n-1}}{1 + r^{(n)}} + \left( 1 - \frac{1+2\delta}{1 + r^{(n)}} \right) \log n
\right)
\end{equation*}
Setting \( r^{(n)} = \delta \) the first claim of the lemma follows. The second claim of the lemma follows by imitating proof of Lemma~\ref{lemma: general finite sums}.
\end{proof}

The only thing which remains to do before we are set up to prove the Proposition~\ref{proposition: exclusion sensitivity} is to show that \( \lim_{n \to \infty} \Var ( \E [f_n(X_0^{(n)}) \mid   \| X_0^{(n)}\|  ]) = 0 \) if \( \lim_{n \to \infty} \sum_{i=1}^n \hat f_n(i)^2  = 0 \). This is the purpose of the following lemma.

\begin{lemma}\label{lemma: variance}
Let \( (f_n)_{n \geq 1 } \) be a sequence of Boolean functions for which \({ \lim_{n \to \infty} \sum_{i=1}^n I_i(f_n)^2 = 0} \). Then \( \lim_{n \to \infty } \Var(\E[f_n(X_0^{(n)}) \mid   \| X_0^{(n)} \|]) = 0 \).
\end{lemma}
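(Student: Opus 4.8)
The plan is to study the function $g_n(m) := \E[f_n(X_0^{(n)})\mid \|X_0^{(n)}\|=m]$ directly as a function of $W:=\|X_0^{(n)}\|$, which is $\bin(n,1/2)$-distributed. Write $p_m:=\pr(W=m)=\binom nm 2^{-n}$, $\bar g_n:=\E[g_n(W)]=\E[f_n(X_0^{(n)})]$, and $T_n:=\sum_{i=1}^n I_i(f_n)$; since Cauchy--Schwarz gives $T_n/\sqrt n\le\bigl(\sum_i I_i(f_n)^2\bigr)^{1/2}$, the hypothesis is equivalent to $T_n/\sqrt n\to 0$. The goal is $\Var(g_n(W))=\sum_m p_m(g_n(m)-\bar g_n)^2\to 0$, and the argument is self-contained, not using the other lemmas of this section.

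The first step is the elementary per-level estimate
\[
|g_n(m)-g_n(m+1)|\le\frac{e(m)}{\binom nm(n-m)},\qquad \sum_{m=0}^{n-1}e(m)=2^{n-1}T_n,
\]
where $e(m)$ is the number of edges of the $n$-cube joining a weight-$m$ vertex to a weight-$(m+1)$ vertex across which $f_n$ changes value. The inequality holds because the bipartite graph between the two slices is biregular with $\binom nm(n-m)=\binom{n}{m+1}(m+1)$ edges, so averaging $f_n$ over, respectively, the lower and the upper endpoint of each edge returns $g_n(m)$ and $g_n(m+1)$; the identity holds since every bichromatic edge joins consecutive levels while the total number of bichromatic edges of the cube is $\sum_i 2^{n-1}I_i(f_n)$.

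Next, I would fix a truncation parameter $A>0$ and set $B_A:=\{m:|m-n/2|\le A\sqrt n\}$. On $B_A$, for $n$ large, Stirling gives $\binom nm\ge c_A 2^n/\sqrt n$ and trivially $n-m\ge n/4$, so for $m_1\le m_2$ in $B_A$,
\[
|g_n(m_1)-g_n(m_2)|\le\sum_{m=m_1}^{m_2-1}\frac{e(m)}{\binom nm(n-m)}\le\frac{4}{c_A\,2^n\sqrt n}\sum_{m}e(m)=\frac{2T_n}{c_A\sqrt n}=:\delta_n(A),
\]
and $\delta_n(A)\to 0$ as $n\to\infty$ for fixed $A$. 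Meanwhile $\pr(W\notin B_A)\le 2e^{-2A^2}=:\beta_A$ by Hoeffding's inequality. Taking $m_0:=\lfloor n/2\rfloor\in B_A$ and splitting $\bar g_n=\sum_m p_m g_n(m)$ according to whether $m\in B_A$ (where $|g_n(m)-g_n(m_0)|\le\delta_n(A)$) or not (where $|g_n(m)-g_n(m_0)|\le 1$, since $g_n$ is $[0,1]$-valued) yields $|\bar g_n-g_n(m_0)|\le\delta_n(A)+\beta_A$, hence $|g_n(m)-\bar g_n|\le 2\delta_n(A)+\beta_A$ for $m\in B_A$; combining,
\[
\Var(g_n(W))=\!\!\sum_{m\in B_A}\!\! p_m(g_n(m)-\bar g_n)^2+\!\!\sum_{m\notin B_A}\!\! p_m(g_n(m)-\bar g_n)^2\le(2\delta_n(A)+\beta_A)^2+\beta_A,
\]
so given $\varepsilon>0$ one first chooses $A$ with $(3\beta_A)^2+\beta_A<\varepsilon$ and then $N$ with $\delta_n(A)\le\beta_A$ for $n\ge N$, giving $\Var(g_n(W))<\varepsilon$ for $n\ge N$.

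The one genuinely delicate point is the decision to restrict the telescoping sum to the bulk $B_A$. Summing the per-level bound over \emph{all} $m$ only gives $\Var(g_n(W))\le\tfrac14 T_n$ (after absorbing the $p_m$ against the $\binom nm(n-m)$), which is useless here because we are \emph{not} given $T_n\to 0$, merely $\sum_i I_i(f_n)^2\to 0$ — a strictly weaker hypothesis, as the Tribes example shows. On $B_A$, however, all the normalising weights $\binom nm(n-m)$ are comparably large, of order $2^n\sqrt n$, and this is precisely what converts the available control $T_n/\sqrt n=o(1)$ into smallness of the total variation of $g_n$ across the typical range of $W$; the atypical values of $W$ then contribute negligibly simply because $g_n$ is bounded.
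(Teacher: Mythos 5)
Your proof is correct and follows essentially the same route as the paper's: you bound the difference of consecutive slice averages by the bichromatic edge count across the slice (via the biregularity identity \( \binom{n}{m}(n-m)=\binom{n}{m+1}(m+1) \)), relate the total bichromatic edge count to \( \sum_i I_i(f_n) \), pass to \( \sum_i I_i(f_n)^2 \) by Cauchy--Schwarz, and use Stirling together with binomial concentration around \( n/2 \) --- exactly the ingredients of the paper's argument, which is merely organized as a proof by contradiction rather than as your direct, quantitative bulk-plus-tail estimate. One cosmetic slip: the hypothesis is not \emph{equivalent} to \( T_n/\sqrt{n}\to 0 \), only stronger, but since you use only the implication in the correct direction nothing is affected.
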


\begin{proof}
Assume for contradiction that there exists some \( \varepsilon > 0 \) s.t. 
\begin{equation}
\lim_{n \to \infty}\Var \left( \E \left[f_n(X_0^{(n)} ) \mid  \| X_0^{(n)} \| \right] \right) >2\varepsilon^2. \label{IIIequation: contr. assumption}
\end{equation}
In practice, this might require taking subsequence, but to simplify notations, we will assume that this holds for all \( n \).

Set 
\[
 \mathcal{E}_m \coloneqq  \E [f(X_0^{(n)} ) \mid \| X_0^{(n)}  \| = m ]
\]
and define
\[
B_n \coloneqq  \left\{ w \in \Omega_n \colon  \left| \mathcal{E}_{\| w \|} - \mathcal{E}_{\lfloor n/2 \rfloor}  \right| > \varepsilon \right\}.
\]
Then
\begin{equation*}
\begin{split}
2\varepsilon^2 &< \Var \left( \E \left[f_n(X_0^{(n)}) \mid  \| X_0^{(n)} \| \right] \right) 
\\&= \E \left[ \left( \E \left[f_n(X_0^{(n)}) \mid  \| X_0^{(n)} \| \right] - \E[f(X_0^{(n)})] \right)^2 \right].
\end{split}
\end{equation*}
As for any real-valued random variable \( Z \), \( \E [Z] \) is the global minimum of the function \( z \mapsto \E [(Z-z)^2] \), we can bound the last expression in the previous equation by
\begin{equation*}
\E \left[ \left( \E \left[f_n(X_0^{(n)}) \mid  \| X_0^{(n)} \|  \right] - \E[f(X_0^{(n)}) \mid \| X_0^{(n)} \| =  \lfloor n/2 \rfloor] \right)^2 \right]
\end{equation*}
which is precisely equal to
\begin{equation}\label{equation: this quantity}
\E \left[ \left(\mathcal{E}_{\| X_0^{(n)} \| }- \mathcal{E}_{\lfloor n/2 \rfloor} \right)^2 \right].
\end{equation}
Now on the event \( B_n \), as \( f_n \) is Boolean, we can bound~\eqref{equation: this quantity} from above by 1. In the opposite situation, given the event \( B_n^c \), we can bound~\eqref{equation: this quantity}  from above by \( \varepsilon^2 \). This gives us the upper bound
\begin{equation*}
2\varepsilon^2 <  P(B_n) \cdot 1 + P(B_n^c)\cdot \varepsilon^2\!,
\end{equation*}
implying that
\[
P(B_n) \geq \varepsilon^2\!.
\]
From this it follows that there is a \( \alpha > 0 \) and a sequence \( ( \alpha_n )_{n \geq 1 } \), where \( \alpha_n \leq \alpha \), such that 
\[
\left| \mathcal{E}_{\lfloor n/2 \rfloor + \alpha_n \sqrt{n}}- \mathcal{E}_{\lfloor n/2 \rfloor}   \right| > \varepsilon
\]
for all \( n \geq 1 \).

Now note that
  \begin{equation*}
\begin{split}
\hspace{3em}&\hspace{-3em}\sum_{(w,w') \colon w \sim w'} (f_n(w) - f_n(w'))^2
\\&\geq 
\!\!\!\! \sum_{\ell = \lfloor n/2 \rfloor }^{\lfloor n/2 \rfloor + \alpha_n\sqrt{n}-1}  \sum_{(w,w') \colon w \sim w',\, \atop \| w \| = \ell, \, \| w' \| = \ell+1 } (f_n(w) - f_n(w'))^2
\\&\geq
\!\!\!\!  \sum_{\ell = \lfloor n/2 \rfloor }^{\lfloor n/2 \rfloor + \alpha_n\sqrt{n}-1} \left|\mathcal{E}_{\ell} \cdot \binom{n}{\ell} \cdot (n-l) - \mathcal{E}_{l+1} \cdot \binom{n}{\ell + 1} \cdot (\ell +1)\right|
\\&=
\!\!\!\!  \sum_{\ell = \lfloor n/2 \rfloor }^{\lfloor n/2 \rfloor + \alpha_n\sqrt{n}-1}  \binom{n}{\ell} \cdot (n-l)  \cdot  \left|\mathcal{E}_{\ell}   - \mathcal{E}_{l+1} \right|.
\end{split}
\end{equation*}
As for all \( \ell \in \{ \lfloor n/2 \rfloor, \ldots, \lfloor n/2 \rfloor + \alpha_n \sqrt{n} -1 \), we have

\[
\binom{n}{\ell} \cdot (n-\ell ) \geq \binom{n}{n/2+\alpha_n \sqrt{n}} \cdot (n - \lfloor n/2 \rfloor - \alpha_n \sqrt{n}) 
\]
and
\[
\sum_{\ell = \lfloor n/2 \rfloor }^{\lfloor n/2 \rfloor + \alpha_n\sqrt{n}-1}    \left|\mathcal{E}_{\ell}   - \mathcal{E}_{l+1} \right|
\geq \left|\mathcal{E}_{\lfloor n/2 \rfloor}   - \mathcal{E}_{\lfloor n/2 \rfloor + \alpha_n\sqrt{n}} \right| > \varepsilon
\]
we obtain
  \begin{equation*}
\begin{split}
\sum_{(w,w') \colon w \sim w'} (f_n(w) - f_n(w'))^2
&\geq 
\binom{n}{n/2+\alpha_n \sqrt{n}} \cdot (n - \lfloor n/2 \rfloor - \alpha_n \sqrt{n})   \cdot  \varepsilon.
\end{split}
\end{equation*}
Using that \(\left| \mathcal{E}_{\lfloor n/2 \rfloor + \alpha_n \sqrt{n}}- \mathcal{E}_{\lfloor n/2 \rfloor}   \right| > \varepsilon\), we obtain
\begin{equation*}
\begin{split}
\sum_{i=1}^n I_i(f_n)^2 
&\geq 
\sum_{i=1}^n \left( \frac{\sum_i I_i(f_n)}{n} \right)^2 
= 
n \cdot \left( \frac{ \sum_{w,w' \colon w \sim w'} (f(w) - f(w'))^2 }{\left| E(\Omega_{2,n}) \right| } \right)^2
\\&\geq
n \cdot \left( \frac{\binom{n}{n/2+\alpha_n \sqrt{n}} \cdot (n - \lfloor n/2 \rfloor - \alpha_n \sqrt{n})   \cdot  \varepsilon}{n2^n} \right)^2
\\&=
\varepsilon^2 
\cdot  \left( \frac{\sqrt{n}}{2^n} \cdot \binom{n}{n/2+\alpha_n \sqrt{n}}    \right)^2 
\cdot  \left( \frac{ n - \lfloor n/2 \rfloor - \alpha_n \sqrt{n}    }{n} \right)^2\!\!\!.
\end{split}
\end{equation*}

By applying Stirlings formula for \( n  \) sufficiently large, this contradicts that \( {\lim_{n \to \infty} \sum_{i=1}^n I_i(f_n)^2 = 0} \), from which the desired conclusion follows.
\end{proof}

We are now ready to give a proof of  Proposition~\ref{proposition: exclusion sensitivity}.
\begin{proof}[Proof of Proposition~\ref{proposition: exclusion sensitivity}]
Note first that by Lemma~\ref{lemma: splitting}, for any \( n \geq 1 \), \( f\colon \mathbb{Z}_2^n \to \{ 0,1 \} \),
\begin{equation*}
\begin{split}
&\Cov (f_n(X_0^{(n)}), f_n(X^{(n)}_{t_{rel}^{(n)}}))
\\&\hspace{2em}=
\sum_{m=0}^n P(\| X_0^{(n)} \| = m) \cdot \Biggl\{ \E \left[ f(X_0^{(n)})f(X^{(n)}_{t_{rel}^n}) \mid \| X^{(n)}_0 \| = m \right] 
\\& \hspace{18em}- \E \left[f(X_0^{(n)}) \mid \| X_0^{(n)} \| = m\right]^2 \Biggr\}
\\&\hspace{4em}+
\Var \left(  \E \left[ f(X_0^{(n)}) \mid   \| X_0^{(n)} \| \right] \right).
\end{split}
\end{equation*}
Given the assumption that \( \lim_{n \to \infty} \sum_{i=1}^n I_i(f_n)^2 = 0 \), Lemma~\ref{lemma: variance} implies that 
\[
 \lim_{n \to 0} \Var \left(  \E \left[ f(X_0^{(n)}) \mid   \| X_0^{(n)} \| \right] \right)= 0 .
\]
This in turn  implies that, to obtain the desired result,  it suffices to show that 
\begin{align*}
\lim_{n \to \infty}
\sum_{m=0}^n P(\| X_0^{(n)} \| = m) \cdot &\Biggl\{ \E \left[ f(X^{(n)}_0)f(X^{(n)}_{t_{rel}^{(n)}}) \mid \| X^{(n)}_0 \| = m \right] 
\\&\hspace{6em}- \E[f(X^{(n)}_0) \mid \| X^{(n)}_0 \| = m]^2 \Biggr\}
=0.
\end{align*}
By Lemma~\ref{lemma: good indices}, for any \( \alpha \in (0,0.5) \), if we let 
\[
 A_n(\alpha) = \{ m \in \{ 0,1, \ldots, n \} \colon \sum_{(ij)}  I_{(ij)}^{(m)}(f_n)^2<  n^{1 - \delta(1 - 2\alpha)}  \}
\]
then
\begin{equation*}
P \left(\| X_0^{(n)} \| \in A_n(\alpha) \right) \geq 1 - \frac{4}{n^{\alpha\delta}} .
\end{equation*}
Combining this with Lemma~\ref{lemma: common bound}, we obtain that for any \( \varepsilon > 0 \) and \( C > 0 \), there is \( C' = C'(C, \delta, \varepsilon) \) such that
\begin{equation*}
\begin{split}
\hspace{3em}&\hspace{-3em}\sum_{m=0}^n P(\| X_0^{(n)} \| = m) \cdot \Biggl\{ \E \left[ f(X_0^{(n)})f(X^{(n)}_{t_{rel}^{(n)}}) \mid \| X_0^{(n)} \| = m \right] 
\\[-2ex]&\hspace{14em}- \E[f(X_0^{(n)}) \mid \| X_0^{(n)} \| = m]^2 \Biggr\}
\\&\leq
\sum_{\mathclap{\substack{m \in A_n(\alpha) \colon \\ 2m(n-m) \geq \varepsilon n(n-1) }}} P(\| X_0^{(n)} \| = m) \cdot \Biggl\{ \E \left[ f(X_0^{(n)})f(X^{(n)}_{t_{rel}^n}) \mid \| X^{(n)}_0 \| = m \right] 
\\[-4ex]&\hspace{14em}
- \E[f(X^{(n)}_0) \mid \| X^{(n)}_0 \| = m]^2 \Biggr\}
\\&\hspace{2em}+ P(A_n^c)
+ P( 2\| X_0^{(n)} \| (n-\| X_0^{(n)} \|) < \varepsilon n(n-1))
\\&\leq
C' \cdot n^{1-\frac{1+2\delta(1-2\alpha)}{1 + \delta(1-2\alpha)}} + e^{- C}
+ P(A_n^c)
\\&\hspace{2em}+ P( 2\, \| X_0^{(n)} \| \cdot (n-\| X_0^{(n)} \|) \geq \varepsilon n(n-1))
\\&\leq
C' \cdot n^{1-\frac{1+2\delta(1-2\alpha)}{1 + \delta(1-2\alpha)}} + e^{- C}
+ 4n^{-\alpha \delta}
\\&\hspace{2em}+ P( 2\, \| X_0^{(n)} \| \cdot  (n-\| X_0^{(n)} \|) < \varepsilon n(n-1)).
\end{split}
\end{equation*}
As \( \| X_0^{(n)} \| \sim \bin(n,1/2) \), \( C \) can be chosen arbitrarily large and \( C' \) does not depend on \( n \), this can be made arbitrarily small by chosing \( n \) large. This finishes the proof.

\end{proof}

\section*{Acknowledgements}
The author would like to thank her supervisor Jeffrey Steif  for suggesting  the problem and for helpful discussions,  and Erik Broman for valuable comments on the manuscript.

\bibliographystyle{plain}
\bibliography{references}

\end{document}